\newtheorem{theorem}{Theorem}[section]
\newtheorem{corollary}[theorem]{Corollary}
\newtheorem{definition}[theorem]{Definition}
\newtheorem{example}[theorem]{Example}
\newtheorem{lemma}[theorem]{Lemma}
\newtheorem{proposition}[theorem]{Proposition}
\newtheorem{remark}[theorem]{Remark}
\newenvironment{proof}[1][Proof]{\textbf{#1.} }{\
\rule{0.5em}{0.5em}}
\def\bea{\begin{eqnarray*}}
\def\eea{\end{eqnarray*}}
\newcommand{\Z}{\mathbb{Z}}
\newcommand{\ul}{\underline}
\newcommand{\ra}{\rightarrow}
\title{ISOMORPHISMS BETWEEN MORITA CONTEXT RINGS\footnote{The research of the
first two authors was supported by Grant ID-1904, contract
479/13.01.2009 of CNCSIS. The third author was supported by the
National Research Foundation of South Africa. Any opinions,
findings and conclusions or recommendations expressed in this
material are those of the authors and therefore the National
Research Foundation does not accept any liability in regard
thereto.}}
\author{C. Boboc$^a$, S. D\u asc\u alescu$^b$, L. van
Wyk$^c$\footnote{Corresponding author. E-mail addresses:
crinaboboc@yahoo.com (C. Boboc), sdascal@fmi.unibuc.ro and
sdascal2001@yahoo.com (S. D\u asc\u alescu), LvW@sun.ac.za (L. van
Wyk)}\\[3pt] $^a$Faculty of Physics, University of Bucharest, Bucharest,
Romania\\[3pt] $^b$Faculty of Mathematics, University of Bucharest, \\
Str.~Academiei 14, Bucharest 1, RO-70109, Romania\\[3pt] $^c$Department of
Mathematical Sciences (Mathematics Division), Stellenbosch University,\\
 Private Bag X1, Matieland 7602, Stellenbosch, South
Africa}
\date{}
\begin{document}
\maketitle

\begin{abstract}

Let $(R, S, _R\negthinspace M_S,
_S\negthinspace N_R, f, g)$ be a general Morita context, and let $T=\left[ \begin{array}{cc} R & _RM_S\\
 _SN_R & S \end{array} \right]$ be the ring associated with this context. Similarly, let
 $T'=\left[ \begin{array}{cc} R' & M'\\
 N' & S' \end{array} \right]$ be another Morita context ring. We study the set
 $\mbox{Iso}(T,T')$ of ring isomorphisms from $T$ to $T'$. Our interest in this problem is
 motivated by: (i) the problem to determine the automorphism group of
 the ring $T$, and (ii) the recovery of the non-diagonal tiles problem for this type of generalized matrix rings.
 We introduce
two classes of isomorphisms from $T$ to $T'$, the disjoint union
of which is denoted by $\mbox{Iso}_0(T,T')$. We describe
$\mbox{Iso}_0(T,T')$ by using the $\Z$-graded ring structure of
$T$ and $T'$. Our main result characterizes $\mbox{Iso}_0(T,T')$
as the set consisting of all semigraded isomorphisms and all
anti-semigraded isomorphisms from $T$ to $T'$, provided that the
rings $R'$ and~$S'$ are indecomposable and at least one of $M'$
and $N'$ is nonzero; in particular $\mbox{Iso}_0(T,T')$ contains
all graded isomorphisms and all anti-graded isomorphisms from $T$
to $T'$. We also present a situation where
$\mbox{Iso}_0(T,T')=\mbox{Iso}(T,T')$. This is in the case where
$R,S,R'$ and $S'$ are rings having only trivial idempotents and
all the Morita maps are zero. In particular, this shows that
 the group of
automorphisms of $T$ is completely determined.
\end{abstract}

\vskip -0.5cm {\it 2010 Mathematics Subject Classification.}
16W20, 16W50, 16S50, 15A33, 16D20. \vskip -0.5cm {\it Keywords and
phrases.} Morita context, bimodule, graded ring, semigraded
isomorphism, automorphism.

\section{Introduction}

 \vskip-0.3cm\qquad Morita contexts appeared as a key ingredient in the work of Morita that
 described equivalences between full categories of modules over
 rings with identities. One of the fundamental results in this
 direction says that the categories of left modules over the rings
 $R$ and $S$ are equivalent if and only if there exists a strict
 Morita context connecting $R$ and $S$.

\vskip -0.4truecm\qquad Throughout the sequel $(R, S,
_R\negthinspace M_S, _S\negthinspace N_R, f, g)$ will be a general
Morita context, i.e.~$R$ and~$S$ are rings with identity, $M$ is a
left $R$, right $S$-bimodule, $N$ is a left $S$, right
$R$-bimodule, $f: M \otimes_S N \ra R$ is a morphism of
$R,R$-bimodules and $g: N\otimes_R M \ra S$ is a morphism of
$S,S$-bimodules, such that if we denote $[m,n] :  = f(m \otimes
n)$ and $(n,  m) : = g(n \otimes m)$, we have that
\begin{equation}
[m,n]m' = m(n, m') \quad \mbox{and} \quad n[m,n'] = (n, m)n'
\label{(1)}
\end{equation}
for all $m,m'\in M$ and all $n,n'\in N$.

\vskip -0.4truecm\qquad With such a Morita context we associate
the ring
$T = \left[ \begin{array}{cc} R & M\\
 N & S \end{array} \right]$
with operations the formal operations of $2\times 2$-matrices,
using $[,]$ and $(,)$ in defining the multiplication (see, for
example [MCR], page 12), more precisely
$$\left[ \begin{array}{cc} r & m\\
 n & s \end{array} \right]
 \left[ \begin{array}{cc} r' & m'\\
 n' & s' \end{array} \right]=\left[ \begin{array}{cc} rr'+[m,n'] & rm'+ms'\\
 nr'+sn' & (n,m')+ss' \end{array} \right]
.$$
$T$ is called the Morita context ring associated with the
given Morita context. Such rings (especially in the case where
both Morita maps $f$ and $g$ are zero, or even more particularly
when $N=0$) have been intensively used to provide examples and
counterexamples in ring theory (see [MCR]).

\vskip -0.4truecm\qquad The aim of this paper is to investigate
isomorphisms between two Morita context rings $T$ as above and $T' = \left[ \begin{array}{cc} R' & M'\\
 N' & S' \end{array} \right]$. This investigation is motivated by
 at least the following two types of problems:

 $\bullet$ {\it What are the automorphisms of a Morita context
 ring $T$?}

 $\bullet$ {\it Can we recover the tiles in Morita context rings?
 For example, if $\left[ \begin{array}{cc} R & M\\
 N & S \end{array} \right]$ and $\left[ \begin{array}{cc} R & M'\\
 N' & S \end{array} \right]$ are two isomorphic Morita context
 rings, when are $M$ and $M'$ (respectively $N$ and $N'$) isomorphic in some sense?}\\

\vskip -0.4truecm\qquad Automorphisms of various kinds of
algebraic structures have been studied extensively in the
literature. Knowing the group of automorphisms of a certain object
can be key information about it. It is a very difficult problem to
find all the automorphisms of $T$. In fact, as we explain in
Remark \ref{finalremark}, Morita context rings are up to
isomorphism just the rings with non-trivial idempotents (indeed a
very large class of rings), and so there is virtually no hope of
finding the automorphisms of $T$ in the general case. Even in a
very particular case, where the rings $R$ and $S$ are the same and
both bimodules $M$ and $N$ are also $R$, while the Morita maps are
just the multiplication of the ring, the Morita ring is the full
$2\times2$ matrix ring $\Bbb M_2(R)$, whose automorphisms are
known only for special rings $R$.

\vskip-0.4truecm\qquad The Skolem-Noether theorem (see, for
example, [R]) states that if $A$ is a simple artinian algebra
which is finite-dimensional over its centre $F$, then all
$F$-automorpisms of $A$ are inner. J\o ndrup [J1] showed that if
$A$ is a simple artinian algebra which is finite-dimensional over
its centre $F$, then all $F$-automorphisms of the ring $\Bbb
U_n(A)$ of $n\times n$ upper triangular matrices over such a ring
$A$ are also inner. (Note that when $n=1$, this is the
Skolem-Noether theorem.) J\o ndrup also computed the automorphism
groups of certain non-semiprime
rings, for example, $\Bbb U_2(k[X]),\  \left[ \begin{array}{cc} k[X] & k[X,Y]\\
 0 & k[Y] \end{array} \right]$, where $k$ is a field and $X$ and $Y$ are indeterminates.
 The automorphism groups of $k[X,Y]$ and $k\langle X,Y\rangle$ (the free algebra)
 are known, and in fact equal (see~[D]).

\vskip -0.4truecm\qquad Rosenberg and Zelinsky [RZ] obtained
information about the extent to which it can be true that not all
automorphisms of central separable algebras are inner. These
algebras constitute a class which is more general than the class
of matrix algebras $\Bbb M_n(R)$ over commutative rings $R$,
considered by Isaacs [I], who showed that, although not all
$R$-algebra automorphisms of $\Bbb M_n(R)$ are inner, the extent
of this failure is somehow under control. For example, the
commutator of any two automorphisms and the $n$th power of each of
them are inner.

\vskip -0.4truecm\qquad For any algebra $A$ over a commutative
ring $R$ such that every nontrivial $R$-algebra endomorphism of
$A$ is an $R$-algebra automorphism, Barker and Kezlan [BK] showed
that every $R$-algebra automorphism $\phi$ of $\Bbb U_n(A)$
factors as $\phi=\varphi\psi$, where $\varphi$ is inner and $\psi$
is an $R$-algebra automorphism of $\Bbb U_n(A)$ which is induced
(componentwise) from some $R$-algebra automorphism of $A$, and
they provided an example showing that $\phi$ itself need not be
inner. J\o ndrup [J2] showed that the above hypothesis on
$R$-algebra endomorphisms of $A$, used in obtaining the
factorization $\phi=\varphi\psi$, can be removed in the case where
$A$ is a prime $R$-algebra.

\vskip -0.4truecm\qquad Other papers on automorphisms of algebras
of upper triangular matrices, automorphisms of structural matrix
algebras and automorphisms of other types of subalgebras of full
matrix algebras include, for example, [C1-2], [Ke] and [Ko].

\vskip -0.4truecm\qquad As far as the possible recovery of the tiles
 is concerned, it was shown in [DW] that
it can happen that
$$\left[ \begin{array}{cc} R & R\\
R & R \end{array} \right]\simeq \left[ \begin{array}{cc} R & 0\\
R & R \end{array} \right]\simeq \left[ \begin{array}{cc} R & R\\
0 & R \end{array} \right]\simeq \left[ \begin{array}{cc} R & 0\\
0 & R \end{array} \right]$$ for certain rings $R$, so the tiles in
the non-diagonal positions of a Morita context ring cannot be
recovered in general. However, in [KDW] a positive recovery result
in this vein was obtained for a generalized
triangular matrix ring $\left[ \begin{array}{cc} R & _RM_S\\
0 & S \end{array} \right]$ over rings $R$ and $S$ having  only the
idempotents 0 and~1, in particular, over indecomposable
commutative rings or over local rings (not necessarily
commutative). In addition, the automorphism group of such a
generalized triangular matrix ring was obtained. In [AW] this
result was extended to the case where the diagonal rings $R$ and
$S$ are strongly indecomposable (not necessarily commutative)
rings, which include rings with only the trivial idempotents, as
well as endomorphism rings of vector spaces, or more generally,
semiprime indecomposable rings. We note that strongly
indecomposable rings are called semicentral reduced rings in
[BHKP].

\vskip -0.4cm\qquad In the present sequel we consider ring
isomorphisms between two Morita context rings $T$ and $T'$. We
denote the set of all such isomorphisms by $\mbox{Iso}(T,T')$. We
introduce two classes $\mbox{Iso}_0^0(T,T')$ and
$\mbox{Iso}_0^1(T,T')$ of such isomorphisms. The disjoint union of
these two classes is denoted by $\mbox{Iso}_0(T,T')$. As Remark
\ref{remaut0aut} shows, $\mbox{Iso}_0(T,T')$ is much smaller than
$\mbox{Iso}(T,T')$ if $[,]\ne0$ or $(,)\ne0$. In the case where
$T'=T$, we denote $\mbox{Iso}_0(T,T)$ by $\mbox{Aut}_0(T)$, and we
show that it is a subgroup of $\mbox{Aut}(T)$, and
$\mbox{Iso}_0^0(T,T)$ is a normal subgroup of $\mbox{Aut}_0(T)$.

\vskip -0.4cm\qquad For understanding $\mbox{Iso}_0(T,T')$ we
emphasize the structure of $T$  and $T'$ as  $\Z$-graded rings.
There are some other isomorphisms associated with this graded
structure: the set of graded isomorphisms, denoted by
$\mbox{Iso}_g^+(T,T')$, and the set of anti-graded isomorphisms,
denoted by $\mbox{Iso}_g^-(T,T')$ (which we define in Section 2).
We consider $\mbox{Iso}_g(T,T')=\mbox{Iso}_g^+(T,T')\cup
\mbox{Iso}_g^-(T,T')$. In the case where $T'=T$,
$\mbox{Aut}_g(T)=\mbox{Iso}_g(T,T)$ is a subgroup of
$\mbox{Aut}(T)$. We show that in the case where one of the Morita
contexts with which $T$ and $T'$ are associated is strict, we have
that $\mbox{Iso}_0(T,T')\subseteq \mbox{Iso}_g(T,T')$.

\vskip -0.4cm\qquad The new concepts of a semigraded isomorphism
and an anti-semigraded isomorphism are introduced in Definition
\ref{defsemigraded}, and our main result describes
$\mbox{Iso}_0(T,T')$ in terms of these isomorphisms from $T$ to
$T'$ in the case where the rings $R'$ and $S'$ in the Morita
context ring $T'$ are indecomposable and at least one of the
bimodules $M'$ and $N'$ is nonzero. To be precise,
Theorem~\ref{semigraded} says that, under these conditions,
$\mbox{Iso}_0^0(T,T')$ is the set of all semigraded isomorphisms
from $T$ to $T'$, and $\mbox{Iso}_0^1(T,T')$ is the set of all
anti-semigraded isomorphisms from $T$ to~$T'$, and so
$\mbox{Iso}_g(T,T')\subseteq \mbox{Iso}_0(T,T')$. In particular,
this result can be seen as a way to recover tiles from certain
types of isomorphisms.

\vskip -0.4cm\qquad In the last section we present a situation
where $\mbox{Iso}_0(T,T')=\mbox{Iso}(T,T')$. This is in the case
where $R,S,R'$ and $S'$ are rings having only trivial idempotents,
and all the Morita maps are zero. In particular, this shows that
the tiles can be recovered from any isomorphism (not only from
semigraded or anti-semigraded ones), and also that the group of
automorphisms of $T$ is completely determined.

\vskip -0.4cm\qquad Throughout the paper by ring we understand a
ring with identity $1\neq 0$.

\section{Two classes of isomorphisms between Morita context rings}
\label{sectiontwoclasses}

\vskip -0.3cm\qquad Let $(R, S, _R\negthinspace M_S,
_S\negthinspace N_R, f, g)$ be a  Morita context, with $[,]$ and
$(,)$ the maps defined as in the Introduction, and let $T$ be the
associated Morita context ring. Consider another Morita context
$(R', S', _{R'}\negthinspace M'_{S'}, _{S'}\negthinspace N'_{R'},
f', g')$, and for simplicity we denote also by $[,]$ and $(,)$ the
maps defined by this second Morita context, and let $T'$ be the
associated Morita context ring.

\vskip -0.3cm\qquad Recall that if $\gamma:R\ra R'$ and $\delta
\in S\ra S'$ are ring isomorphisms, we say that a morphism $u:
(M,+)\ra (M',+)$ is a $\gamma$-$\delta$-bimodule isomorphism  if
$$u (rms) = \gamma(r ) u (m) \delta (s)$$ for all $m\in M, \ r\in
R, \ s\in S.$ This is in fact equivalent to the fact that $u$ is
an isomorphism of $R$-$S$-bimodules when $M'$ is regarded as such
a bimodule via $\gamma$ and $\delta$.

\vskip -0.4cm\qquad We consider two classes of ring isomorphisms
between the Morita context rings $T$ and $T'$, constructed in the
 following two propositions.

\begin{proposition} \label{propdefaut0}
Let $(\gamma, \delta, u,v,m'_0,n'_0)$ be a 6-tuple such that
$\gamma:R\ra R'$ and $\delta:S\ra S'$ are ring isomorphisms,
$u:M\ra M'$ is a $\gamma$-$\delta$-bimodule isomorphism, \ $v:N\ra
N'$ is a $\delta$-$\gamma$-bimodule isomorphism,  and $m'_0 \in
M'$ and $n'_0 \in N'$ are fixed elements, such that
the following conditions are satisfied:\\
(i) $[m'_0, N'] =0$ and $(N', m'_0) =0$\\
(ii) $[M', n'_0] =0$ and $(n'_0, M') =0$\\
(iii) $[u(m), v(n) ] = \gamma ([m,n])$ and $(v(n), u(m)) = \delta(
(n, m))$ for all $m\in M, \ n\in N$.\\
Then the map $\phi:T\ra T'$ defined by
$$\phi \left(\left[ \begin{array}{cc} r & m\\
n &s \end{array}\right]\right) = \left[ \begin{array}{cc}\gamma
(r) & \gamma (r) m'_0 -m' _0 \delta (s) + u(m) \\ n'_0 \gamma (r)
- \delta (s) n'_0 + v(n) & \delta (s) \end{array}\right]$$ is a
ring isomorphism.
\end{proposition}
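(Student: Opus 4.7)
The plan is to factor $\phi$ as a composition of two ring isomorphisms, separating the ``change of scalars'' part (governed by $\gamma,\delta,u,v$) from the ``translation'' part (governed by $m'_0,n'_0$). First, I would consider the auxiliary map $\phi_0:T\ra T'$ given componentwise by
$$\phi_0\!\left(\left[\begin{array}{cc}r & m\\ n & s\end{array}\right]\right)=\left[\begin{array}{cc}\gamma(r) & u(m)\\ v(n) & \delta(s)\end{array}\right].$$
Additivity and preservation of $1$ are immediate from $\gamma(1_R)=1_{R'}$ and $\delta(1_S)=1_{S'}$; multiplicativity reduces, by the product rule in a Morita context ring, to the $\gamma$-$\delta$ bimodule compatibilities of $u$ and $v$ together with precisely the two equalities in condition (iii), namely $[u(m),v(n')]=\gamma([m,n'])$ and $(v(n),u(m'))=\delta((n,m'))$. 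Bijectivity follows from that of $\gamma,\delta,u,v$, so $\phi_0$ is a ring isomorphism.

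Next, I would set
$$Y=\left[\begin{array}{cc}1 & -m'_0\\ n'_0 & 1\end{array}\right]\in T'.$$
Conditions (i) and (ii) specialize to $[m'_0,n'_0]=0$ and $(n'_0,m'_0)=0$, and a direct product computation from the Morita matrix rule then shows that $Y$ is a unit of $T'$ with inverse
$$Y^{-1}=\left[\begin{array}{cc}1 & m'_0\\ -n'_0 & 1\end{array}\right].$$
Consequently, inner conjugation $\iota_Y(t)=YtY^{-1}$ is a ring automorphism of $T'$.

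The heart of the argument is then the identity
$$\phi(a)=Y\,\phi_0(a)\,Y^{-1}\quad\mbox{for every }a\in T.$$
Once this is established, $\phi=\iota_Y\circ\phi_0$ is a composition of two ring isomorphisms, hence itself a ring isomorphism, and the proposition follows. The main obstacle --- indeed, essentially the only substantive calculation --- is verifying this identity: expanding $Y\phi_0(a)Y^{-1}$ produces a number of cross-terms of the form $[u(m),n'_0]$, $[m'_0\delta(s),n'_0]$, $[m'_0,v(n)]$, $(n'_0,u(m))$, $(n'_0\gamma(r),m'_0)$, $(v(n),m'_0)$ and scalar variants thereof. Using the bimodule bilinearity of $f'$ and $g'$ to absorb the scalars $\gamma(r),\delta(s)$ into $m'_0$ or $n'_0$, each such term lands in one of the sets $[m'_0,N']$, $[M',n'_0]$, $(n'_0,M')$, $(N',m'_0)$, and therefore vanishes by condition (i) or (ii). What remains collapses exactly to the stated formulas for the $(1,2)$- and $(2,1)$-entries of $\phi(a)$. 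No individual step is delicate, but the sign bookkeeping must be done with care.
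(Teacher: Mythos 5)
Your proof is correct, but it takes a genuinely different route from the paper's. The paper verifies directly that $\phi$ is a ring isomorphism: additivity and injectivity are inherited from $\gamma,\delta,u,v$; multiplicativity is checked by comparing the $(1,2)$-entries of $\phi(A)\phi(B)$ and $\phi(AB)$ (the other positions being ``similar''); and surjectivity is established by exhibiting an explicit preimage. You instead factor $\phi=\iota_Y\circ\phi_0$, where $\phi_0$ is the graded isomorphism $\left[\begin{array}{cc}r & m\\ n & s\end{array}\right]\mapsto\left[\begin{array}{cc}\gamma(r) & u(m)\\ v(n) & \delta(s)\end{array}\right]$ (a ring map precisely because of the bimodule conditions and (iii)) and $\iota_Y$ is conjugation by the unit $Y=\left[\begin{array}{cc}1 & -m'_0\\ n'_0 & 1\end{array}\right]$, whose invertibility and whose conjugation formula both rest on the vanishing conditions (i) and (ii). I checked the conjugation identity $Y\phi_0(a)Y^{-1}=\phi(a)$: the off-diagonal cross-terms are exactly the ones you list and each lies in $[m'_0,N']$, $[M',n'_0]$, $(N',m'_0)$ or $(n'_0,M')$, so the identity holds. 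Your decomposition buys several things: bijectivity of $\phi$ comes for free from the factorization, the somewhat opaque formula for $\phi$ is explained as ``graded isomorphism followed by an inner automorphism,'' and the argument dovetails with the paper's later use of inner automorphisms by unipotent-type units in Remark \ref{remaut0aut} and with Proposition \ref{autgstrict} (where $m'_0=n'_0=0$ forces $\phi=\phi_0$). The computational burden is comparable to the paper's, just relocated into the single identity $\phi=\iota_Y\circ\phi_0$.
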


\begin{proof} The additivity (respectively the injectivity) of
$\gamma,\delta,u$ and $v$ ensure that $\phi$ is additive
(respectively injective). \vskip -0.4truecm\qquad Next, let
$r_1,r_2\in R, \ s_1,s_2\in S, \ m_1,m_2\in M, \ n_1,n_2\in N.$
For the multiplicativity of $\phi$ we only show that the entries
of
$$\phi \left(
\left[ \begin{array}{cc} r_1 & m_1\\ n_1 &s_1
\end{array}\right]\right)\phi \left(\left[
\begin{array}{cc} r_2 & m_2\\ n_2 &s_2 \end{array}\right]\right)\qquad \mbox{and} \qquad
\phi \left(\left[
\begin{array}{cc} r_1r_2 +[m_1,n_2] & r_1m_2+m_1s_2\\ n_1r_2+s_1n_2
&(n_1,m_2)+s_1s_2
\end{array}\right]\right)$$ in position $(1,2)$ are equal, since the equality in the
other positions can be checked in a similar way. These are,
respectively, \vskip -0.8truecm
$$\gamma(r_1)\bigl(\gamma(r_2)m'_0-m'_0\delta(s_2)+u(m_2)\bigr)+\bigl(\gamma(r_1)m'_0-
m'_0\delta(s_1)+u(m_1)\bigr)\delta(s_2)$$ \vskip -0.5truecm and
\vskip -0.8truecm
$$\gamma(r_1)\bigl(r_1r_2+[m_1,n_2]\bigr)m'_0-m'_0\delta\bigl((n_1,m_2)+
s_1s_2\bigr)+u(r_1m_2+m_1s_2).$$ Using the hypotheses of the
proposition, both these expressions simplify to \vskip -0.8truecm
$$\gamma(r_1)\gamma(r_2)m'_0+\gamma(r_1)u(m_2)-m'_0\delta(s_1)\delta(s_2) +
u(m_1)\delta(s_2).$$

\vskip -0.5truecm\qquad To see that $\phi$ is onto, let $\left[
\begin{array}{cc} r' & m'\\ n' &s' \end{array}\right] \in T'$. The
surjectivity of $\gamma,\delta,u$ and $v$ implies that there are
$r\in R, \ s\in S, \ m\in M$ and $n\in N$ such that $\gamma(r)=r',
\ \delta(s)=s',  \ u(m)=m'-\gamma(r)m'_0+m'_0\delta(s)$ and
$v(n)=n'-n'_0\gamma(r)+\delta(s)n'_0$, and so $\phi \left(\left[
\begin{array}{cc} r & m \\ n &s \end{array}\right]\right)$ \vskip
-0.5truecm \bea &=& \left[ \begin{array}{cc} \gamma(r) & \gamma
(r) m'_0 -m' _0 \delta (s) + (m'-\gamma(r)m'_0+m'_0\delta(s)) \\
n'_0 \gamma (r) - \delta (s) n'_0 +
(n'-n'_0\gamma(r)+\delta(s)n'_0)   & \delta(s) \end{array}\right] \\
&=& \left[ \begin{array}{cc} r' & m'\\ n' &s'
\end{array}\right],\eea which concludes the proof. \end{proof}

\begin{proposition} \label{propdefaut1} Let
$(\rho,\sigma,\mu,\nu,m'_\ast,n'_\ast)$ be a 6-tuple with $\rho: R
\ra S'$ and $\sigma : S \ra R'$ ring isomorphisms, $\mu : (M, +)
\ra (N', +)$ and $\nu : (N, +) \ra (M', +)$ group isomorphisms
such that $\mu (rms) = \rho(r ) \mu (m) \sigma (s)$ and $\nu (sn
r) = \sigma (s) \nu (n) \rho (r)$ for all $m\in M, \ n\in N, \
r\in R, \ s\in S$, and  $m'_\ast \in M'$ and $n'_\ast \in N'$ are
fixed elements, such that the following properties are
satisfied:\\
(i) $[m'_\ast, N'] =0$ and $(N', m'_\ast ) =0$\\
(ii) $[M', n'_\ast ] =0$ and $(n'_\ast, M') =0$\\
(iii) $(\mu (m), \nu (n) ) = \rho ([m,n])$ and $[\nu (n), \mu (m)
] = \sigma ((n, m))$ for all $m\in M, \ n\in N$.\\
Then the map $\psi:T\ra T'$ defined by
$$\psi \left(\left[ \begin{array}{cc} r & m \\
n & s \end{array}\right]\right) = \left[ \begin{array}{cc}\sigma
(s) & m'_\ast \rho(r) - \sigma (s) m'_\ast + \nu (n) \\
\rho(r)n'_\ast - n'_\ast \sigma (s) + \mu (m) & \rho(r)
\end{array} \right]$$ is a ring isomorphism.
\end{proposition}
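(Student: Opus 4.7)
The plan is to proceed in close parallel with the proof of Proposition \ref{propdefaut0}, keeping track of the ``swap'' built into $\psi$: the $(1,1)$ entry of $\psi(x)$ is $\sigma$ of the $(2,2)$ entry of $x$, symmetrically in the opposite corner, while the off-diagonal maps $\mu$ and $\nu$ also interchange corners. The formal similarity means the same skeleton of argument works, but with (iii) used in its anti form $(\mu(m),\nu(n))=\rho([m,n])$ and $[\nu(n),\mu(m)]=\sigma((n,m))$.

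First, the additivity and injectivity of $\psi$ follow immediately from the additivity and injectivity of $\rho,\sigma,\mu,\nu$. Indeed, if $\psi$ sends an element of $T$ to zero, then $\rho(r)=0$ and $\sigma(s)=0$ force $r=s=0$, after which the off-diagonal entries collapse to $\nu(n)=0$ and $\mu(m)=0$, whence $n=m=0$.

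Second, multiplicativity is checked entry by entry. Fix $r_1,r_2\in R,\ s_1,s_2\in S,\ m_1,m_2\in M,\ n_1,n_2\in N$, and compare $\psi(xy)$ with $\psi(x)\psi(y)$ in each of the four positions. I sketch the $(1,1)$ entry as a template; the $(2,2)$ case is dual. On one hand, the $(1,1)$ entry of $\psi(xy)$ is $\sigma(s_1s_2+(n_1,m_2))=\sigma(s_1)\sigma(s_2)+[\nu(n_1),\mu(m_2)]$ by (iii). On the other hand, the $(1,1)$ entry of $\psi(x)\psi(y)$ is $\sigma(s_1)\sigma(s_2)+[A,B]$, where $A$ is the $(1,2)$ entry of $\psi(x)$ and $B$ is the $(2,1)$ entry of $\psi(y)$. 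Expanding $[A,B]$ by bilinearity and using the tensor identities $[m's',n']=[m',s'n']$ and $[m',n'r']=[m',n']r'$ in $T'$, conditions (i) and (ii) annihilate every term containing $m'_\ast$ in the left slot or $n'_\ast$ in the right slot, leaving only $[\nu(n_1),\mu(m_2)]$. The off-diagonal entries are verified analogously: besides the obvious terms supplied by the bimodule-isomorphism properties of $\mu$ and $\nu$, one encounters residual expressions of the form $m'_\ast\rho([m_1,n_2])$ and $\sigma((n_1,m_2))m'_\ast$ (and symmetric ones with $n'_\ast$), which by (iii) become $m'_\ast(\mu(m_1),\nu(n_2))$ and $[\nu(n_1),\mu(m_2)]m'_\ast$, and are then killed by a single application of the Morita relations (\ref{(1)}) together with (i) and (ii).

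Third, for surjectivity, given an arbitrary element of $T'$, set $s=\sigma^{-1}$ of its $(1,1)$ entry, $r=\rho^{-1}$ of its $(2,2)$ entry, and choose the unique $n\in N$ and $m\in M$ for which $\nu(n)$ and $\mu(m)$ equal the $(1,2)$ and $(2,1)$ entries corrected by the fixed contributions from $m'_\ast$ and $n'_\ast$; a direct substitution shows that $\psi$ of the resulting element is the given target. The main (and only substantive) obstacle is the cancellation argument in the multiplicativity check described above, which is the natural anti-analogue of the cancellation in Proposition \ref{propdefaut0} and presents no genuine new difficulty.
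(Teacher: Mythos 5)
Your proposal is correct and follows exactly the route the paper intends: the paper's own proof of this proposition is literally ``Similar to the proof of Proposition \ref{propdefaut0},'' and your entry-by-entry verification (with the anti-form of (iii) feeding into the Morita relations (\ref{(1)}) and conditions (i)--(ii) to kill the residual $m'_\ast$ and $n'_\ast$ terms) is precisely that parallel argument carried out.
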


\begin{proof} Similar to the proof of Proposition
1.1.\end{proof}

\qquad We denote by $\mbox{Iso}_0^0(T,T')$ and
$\mbox{Iso}_0^1(T,T')$ the sets of ring isomorphisms defined in
Proposition~\ref{propdefaut0} and Proposition \ref{propdefaut1}
respectively. Of course,  $\mbox{Iso}_0^0(T,T')$ and
$\mbox{Iso}_0^1(T,T')$ may be empty; for instance, in the case
where the ring $R$ is isomorphic neither to $R'$ nor to $S'$, both
these sets are empty. In any case we have that
$\mbox{Iso}_0^0(T,T')\cap \mbox{Iso}_0^1(T,T')=\emptyset$. We
denote
$$\mbox{Iso}_0(T,T')=\mbox{Iso}_0^0(T,T')\cup
\mbox{Iso}_0^1(T,T').$$ \emph{}In the particular case where
$T'=T$, the isomorphisms defined in Proposition~\ref{propdefaut0}
and Proposition~\ref{propdefaut1} are automorphisms of the ring
$T$. We denote $\mbox{Aut}_0^0(T)=\mbox{Iso}_0^0(T,T)$,
$\mbox{Aut}_0^1(T)=\mbox{Iso}_0^1(T,T)$ and
$\mbox{Aut}_0(T)=\mbox{Iso}_0(T,T)$. Clearly
$\mbox{Aut}_0^0(T)\neq \emptyset$, since it always contains the
identity morphism, while $\mbox{Aut}_0^1(T)$ may be empty (for
instance in the case where $R$ and $S$ are not isomorphic).
 \vskip -0.4truecm\qquad   Direct verification
yields the following two results.

\begin{proposition} \label{compositioniso}
Let $T,T',T''$ be the Morita context rings associated with the
Morita contexts $(R, S,M, N, f, g)$, $(R', S',M', N', f', g')$,
$(R'', S'',M'', N'', f'', g'')$ respectively. Let $\phi\in {\rm
Iso}_0^0(T,T')$ and $\phi' \in {\rm Iso}_0^0(T',T'')$, with $\phi$
and $\phi'$ corresponding to the 6-tuples $(\gamma, \delta, u, v,
m'_0, n'_0)$ and $(\gamma', \delta', u', v', m_0'', n_0'')$
respectively, and let $\psi \in {\rm Iso}_0^1(T,T')$ and $\psi'
\in {\rm Iso}_0^1(T',T'')$, with $\psi$ and $\psi'$ corresponding
to the 6-tuples $(\rho, \sigma, \mu, \nu, m'_\ast, n'_\ast)$ and
$(\rho', \sigma', \mu', \nu', m_\ast'', n_\ast'')$ respectively.
Then \vskip -0.4cm (i) $\phi' \circ \phi, \psi' \circ \psi \in
{\rm Iso}_0^0(T,T''), \phi^{-1}\in {\rm Iso}_0^0(T',T)$, $\phi'
\circ \psi, \psi' \circ \phi \in {\rm Iso}_0^1(T,T''), \psi^{-1}
\in {\rm Iso}_0^1(T',T)$; \vskip -0.4cm (ii) The six isomorphisms
 in (i) correspond to the following $6$-tuples respectively:
\vskip -0.4cm
$$\begin{array}{lll}
\phi' \circ \phi & \leftrightarrow &
(\gamma' \circ \gamma, \delta' \circ \delta, u'\circ u, v' \circ v, m''_0+u' (m'_0), n''_0 + v'(n'_0))\\
\\
\psi' \circ \psi & \leftrightarrow & (\sigma' \circ \rho, \rho'
\circ \sigma, \nu' \circ \mu, \mu' \circ \nu,
\nu' (n'_\ast) - m_\ast'', \mu' (m'_\ast ) -n_\ast'' ) \\
\\
\phi^{-1} & \leftrightarrow & (\gamma^{-1}, \delta^{-1}, u^{-1}, v^{-1},u^{-1} (-m'_0), v^{-1}(-n'_0))\\
\\
\phi' \circ \psi & \leftrightarrow &
(\delta' \circ \rho, \gamma' \circ \sigma, v' \circ \mu, u' \circ \nu, u' (m_\ast' ) -m''_0, v'(n_\ast') - n''_0)\\
\\
\psi' \circ \phi & \leftrightarrow &
(\rho' \circ \gamma, \sigma' \circ \delta, \mu' \circ u, \nu' \circ v, \nu' (n'_0) + m_\ast'', \mu' (m'_0) + n_\ast''))\\
\\
\psi^{-1} & \leftrightarrow & (\sigma^{-1}, \rho^{-1}, \nu^{-1},
\mu^{-1}, \mu^{-1} (n_\ast' ), \nu^{-1} (m_\ast' )).
\end{array}$$
\end{proposition}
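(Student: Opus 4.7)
The strategy is direct verification, case by case, of the six claimed compositions and inverses. For each case I would (a) compute the composite map entrywise and match it against the template of Proposition~\ref{propdefaut0} or Proposition~\ref{propdefaut1}, thereby reading off the candidate 6-tuple, and (b) check that this 6-tuple satisfies conditions (i), (ii) and (iii) of the corresponding proposition, so that the composite lies in ${\rm Iso}_0^0$ or ${\rm Iso}_0^1$ as advertised. The class assignment (whether the composite is of type $0$ or type $1$) is dictated by whether the two factors permute the diagonal rings the same way or oppositely, which explains why $\phi'\circ\phi$ and $\psi'\circ\psi$ end up in ${\rm Iso}_0^0$ while $\phi'\circ\psi$ and $\psi'\circ\phi$ end up in ${\rm Iso}_0^1$.

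The routine ingredients are easy: ring isomorphisms compose to ring isomorphisms, a $\gamma$-$\delta$-bimodule isomorphism followed by a $\gamma'$-$\delta'$-bimodule isomorphism is a $(\gamma'\circ\gamma)$-$(\delta'\circ\delta)$-bimodule isomorphism, and condition (iii) transfers through composition, e.g.~$[(u'\circ u)(m),(v'\circ v)(n)] = \gamma'([u(m),v(n)]) = (\gamma'\circ\gamma)([m,n])$. The analogous calculation for $\psi'\circ\psi$ uses condition (iii) of $\psi'$ applied at the swapped arguments $\mu(m)\in N'$ and $\nu(n)\in M'$, producing $[\nu'(\mu(m)),\mu'(\nu(n))]=\sigma'((\mu(m),\nu(n)))=(\sigma'\circ\rho)([m,n])$, exactly what condition (iii) of the composite demands.

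The central computation is the bookkeeping of the displacement elements. For $\phi'\circ\phi$, applying $\phi'$ to the image of $\phi$ in position $(1,2)$ yields $u'\bigl(\gamma(r)m'_0 - m'_0\delta(s)\bigr)$, which by the $\gamma'$-$\delta'$-bimodule property of $u'$ expands to $(\gamma'\circ\gamma)(r)u'(m'_0) - u'(m'_0)(\delta'\circ\delta)(s)$, and merges with the native term from $\phi'$ into a single displacement $m''_0+u'(m'_0)$. For the mixed compositions $\phi'\circ\psi$ and $\psi'\circ\phi$, the $\psi$-factor swaps positions $(1,2)$ and $(2,1)$, which is what forces the subtraction patterns $u'(m'_\ast)-m''_0$ and $\nu'(n'_0)+m''_\ast$ in the 6-tuples. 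The inverse 6-tuples for $\phi^{-1}$ and $\psi^{-1}$ fall out by requiring that the composite reproduces the identity, which forces the displacements to cancel the additive shifts introduced by $\phi$ and $\psi$.

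The most delicate step will be verifying conditions (i) and (ii) for the composite displacement elements. For instance, for $\psi'\circ\psi$ the new displacement in the $(1,2)$-slot is $\nu'(n'_\ast)-m''_\ast$, and one needs $[\nu'(n'_\ast),N'']=0$ and $(N'',\nu'(n'_\ast))=0$. I would obtain these from condition (iii) of $\psi'$ applied with $n'=n'_\ast$: since $[M',n'_\ast]=0$ and $(n'_\ast,M')=0$, condition (iii) gives $[\nu'(n'_\ast),\mu'(m')]=\sigma'((n'_\ast,m'))=0$ and $(\mu'(m'),\nu'(n'_\ast))=\rho'([m',n'_\ast])=0$ for every $m'\in M'$, and because $\mu'$ is a bijection $M'\to N''$ this is precisely the required annihilation. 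The same pattern, transporting annihilators through the bimodule isomorphisms $u',v',\mu',\nu'$ via condition (iii), handles all of $\phi'\circ\phi$, $\phi^{-1}$, $\phi'\circ\psi$, $\psi'\circ\phi$ and $\psi^{-1}$ uniformly.
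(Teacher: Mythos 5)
Your proposal is correct and matches the paper's approach: the paper dispatches this proposition with the single remark that "direct verification yields the following two results," and your outline is precisely that verification, including the one genuinely non-obvious point (transporting the annihilation conditions (i) and (ii) for the composite displacement elements through $u',v',\mu',\nu'$ via condition (iii), using that these maps are bijections onto $M''$ and $N''$). The entrywise bookkeeping you describe for the $(1,2)$ and $(2,1)$ slots reproduces the stated 6-tuples exactly.
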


\begin{corollary}
${\rm Aut}_0(T)$ is a subgroup of ${\rm Aut}(T)$ and ${\rm
Aut}_0^0(T)$ is a normal subgroup of ${\rm Aut}_0(T)$.
\end{corollary}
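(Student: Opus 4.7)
The plan is to read both claims directly off Proposition \ref{compositioniso} specialized to the case $T=T'=T''$. For the subgroup statement, I would verify the three standard axioms inside $\mbox{Aut}(T)$. The identity lies in $\mbox{Aut}_0^0(T)\subseteq\mbox{Aut}_0(T)$, by taking $\gamma=\mbox{id}_R$, $\delta=\mbox{id}_S$, $u=\mbox{id}_M$, $v=\mbox{id}_N$, $m'_0=0$ and $n'_0=0$ in Proposition \ref{propdefaut0}. The six formulas in Proposition \ref{compositioniso}(i) then show that each composition among elements of $\mbox{Aut}_0^0(T)\cup\mbox{Aut}_0^1(T)$ again lands in $\mbox{Aut}_0^0(T)\cup\mbox{Aut}_0^1(T)=\mbox{Aut}_0(T)$, and that inverses of type-$0$ (respectively type-$1$) automorphisms are again of type $0$ (respectively type $1$). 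This gives closure and inverses.

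For normality, my plan is to encode the composition pattern of Proposition \ref{compositioniso}(i) as a $\Z/2\Z$-parity. Define $\epsilon:\mbox{Aut}_0(T)\to \Z/2\Z$ by $\epsilon(\phi)=0$ for $\phi\in\mbox{Aut}_0^0(T)$ and $\epsilon(\psi)=1$ for $\psi\in\mbox{Aut}_0^1(T)$. This map is well-defined because the two classes are disjoint, as noted just before Proposition \ref{compositioniso}. The composition table in Proposition \ref{compositioniso}(i) says exactly that type $0\circ$ type $0$ and type $1\circ$ type $1$ produce type $0$, while mixed compositions produce type $1$, which matches addition in $\Z/2\Z$. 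Hence $\epsilon$ is a group homomorphism, and its kernel $\mbox{Aut}_0^0(T)$ is normal in $\mbox{Aut}_0(T)$.

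Since Proposition \ref{compositioniso} already carries out all the computational work of identifying the 6-tuples attached to compositions and inverses, there is essentially no obstacle here; the corollary is a bookkeeping consequence of that proposition. The only mild subtlety worth flagging in the write-up is the well-definedness of $\epsilon$, which depends on the disjointness $\mbox{Aut}_0^0(T)\cap \mbox{Aut}_0^1(T)=\emptyset$ recorded earlier. If preferred, the normality can equivalently be established by hand by computing $\theta\phi\theta^{-1}$ for $\phi\in\mbox{Aut}_0^0(T)$ in the two cases $\theta\in\mbox{Aut}_0^0(T)$ and $\theta\in\mbox{Aut}_0^1(T)$ using the relevant rows of Proposition \ref{compositioniso}(i), but the homomorphism argument is cleaner and delivers the same conclusion.
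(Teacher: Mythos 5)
Your proposal is correct and follows the paper's intended route: the paper states the corollary as an immediate consequence of Proposition \ref{compositioniso} (``direct verification''), which is exactly what you do by reading off closure, identity, and inverses from the six composition formulas. Packaging normality as the kernel of the parity map to $\Z/2\Z$ (well-defined by the disjointness of ${\rm Aut}_0^0(T)$ and ${\rm Aut}_0^1(T)$) is a clean and valid way to finish, and it handles the degenerate case ${\rm Aut}_0^1(T)=\emptyset$ automatically.
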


\qquad Let us note that if $(R, S, M, N, f, g)$ is a Morita context, and $T=\left[ \begin{array}{cc} R &M\\
 N & S \end{array} \right]$ is the associated Morita context ring,
 then $(S,R, N,M,g,f)$ is also a Morita context, and we denote by $T^{-1}=\left[ \begin{array}{cc} S &N\\
 M & R \end{array} \right]$ its associated Morita context ring.
 Clearly the map $$\alpha_T:T\ra T^{-1},  \ \ \alpha_T\left(\left[ \begin{array}{cc} r & m\\
n &s \end{array}\right]\right)=\left[ \begin{array}{cc} s & n\\
m &r \end{array}\right]$$ is a ring isomorphism. In fact,
$\alpha_T\in \mbox{Iso}_0^1(T,T^{-1})$. Moreover, we have that
$\alpha_T^{-1}=\alpha_{T^{-1}}$. Hence, it is easy to see from
Proposition \ref{compositioniso}  that

\begin{equation} \label{linkiso0iso1}
\mbox{Iso}_0^1(T,T')=\mbox{Iso}_0^0(T^{-1},T')\alpha_T=\alpha_{T'}^{-1}\mbox{Iso}_0^0(T,(T')^{-1})
\end{equation}

\qquad The following shows that in general $\mbox{Iso}_0(T,T')\neq
\mbox{Iso}(T,T')$.

\begin{remark} \label{remaut0aut}
Let $(R, S,M, N, f, g)$ and $(R', S',M', N', f', g')$ be Morita
contexts and consider the associated Morita context rings $T$ and
$T'$. Assume that $T\simeq T'$. We show that if  $\;{\rm
Iso}_0(T,T')={\rm Iso}(T,T')$, the set of all ring isomorphisms
from $T$ to $T'$, then necessarily $f,g,f'$ and $g'$ are~0 (thus
all the maps $[,]$ and $(,)$ from the two contexts are zero).
Indeed, we also have that ${\rm Iso}_0(T',T)={\rm Iso}(T',T)$.
Since $T\simeq T'$, we have that \bea {\rm Aut}(T)&=&\{
\phi_1\circ \phi_2\;|\; \phi_1\in
{\rm Iso}(T',T), \phi_2\in {\rm Iso}(T,T')\}\\
&=&\{ \phi_1\circ \phi_2\;|\; \phi_1\in {\rm Iso}_0(T',T),
\phi_2\in {\rm Iso}_0(T,T')\}\\
&\subseteq &{\rm Aut}_0(T),\eea and so ${\rm Aut}_0(T)={\rm
Aut}(T)$.

\vskip -0.5truecm\qquad Now let $m\in
M$. Then $\left[ \begin{array}{cc} 1&m  \\
0 & 1
\end{array}\right]\in U(T)$, and let $\eta\in {\rm Aut}(T)$ be
the associated inner automorphism. Then

\bea \eta \left(\left[ \begin{array}{cc} 0&0  \\
n & 0
\end{array}\right]\right)&=&\left[ \begin{array}{cc} 1&m  \\
0 & 1
\end{array}\right]\left[ \begin{array}{cc} 0&0  \\
n & 0
\end{array}\right]\left[ \begin{array}{cc} 1&-m  \\
0 & 1
\end{array}\right]\\
&=&\left[ \begin{array}{cc} [m,n]&-[m,n]m  \\
n & -(n,m)
\end{array}\right]\eea for every $n\in N$. As ${\rm Aut}_0(T)={\rm Aut}(T)$,
$\eta$ must be either in ${\rm Aut}_0^0(T)$ or in ${\rm
Aut}_0^1(T)$, and so
$\eta\left(\left[ \begin{array}{cc} 0&0  \\
N & 0
\end{array}\right]\right)$ is either contained in  $\left[ \begin{array}{cc} 0&0  \\
N & 0
\end{array}\right]$ or in $\left[ \begin{array}{cc} 0&M  \\
0 & 0
\end{array}\right]$. This shows that $[m,n]=0$ and $(n,m)=0$ for all $m\in M$ and all $n\in
N$. Thus $f=0$ and $g=0$. By symmetry one gets also that $f'=0$
and $g'=0$.

\end{remark}

\section{Isomorphisms associated with the graded structure of
Morita context rings}\label{sectiongraded}

\vskip -0.3cm\qquad We keep the notation as in Section
\ref{sectiontwoclasses}. For basic concepts about graded rings we
refer to [NVO]. A ring $A$ is $\Z$-graded if
$A=\oplus_{i\in{\Z}}A_i$, a direct sum of additive subgroups, such
that $A_iA_j\subseteq A_{i+j}$ for any $i,j\in \Z$.

\vskip -0.3cm\qquad The Morita context ring $T=\left[ \begin{array}{cc} R&M  \\
N & S
\end{array}\right]$ has a structure of
a $\Z$-graded ring with homogeneous components

$$T_{-1}=\left[ \begin{array}{cc} 0&0  \\
N & 0
\end{array}\right],\;\; T_0=\left[ \begin{array}{cc} R&0  \\
0 & S
\end{array}\right],\;\; T_1=\left[ \begin{array}{cc} 0&M  \\
0 & 0
\end{array}\right]$$
and $T_i=0$ for every $i\notin \{-1,0,1\}$. Therefore if $T'$ is
another Morita context ring, we can consider two classes of ring
isomorphisms from $T$ to $T'$, related to the graded structure, as
follows:

$\bullet$ {\it Graded isomorphisms}, which are isomorphisms
$\phi:T\ra T'$ such that $\phi(T_i)\subseteq T'_i$ for all $i\in
{\Z}$. We denote by $\mbox{Iso}_g^+(T,T')$ the set of all such
isomorphisms. \vskip -0.5truecm $\bullet$ {\it Anti-graded
isomorphisms}, which are isomorphisms $\phi:T\ra T'$ such that
$\phi(T_i)\subseteq T'_{-i}$ for all $i\in {\Z}$. We denote by
$\mbox{Iso}_g^-(T,T')$ the set of all such isomorphisms.

\qquad We denote by $\mbox{Iso}_g(T,T')=\mbox{Iso}_g^+(T,T')\cup
\mbox{Iso}_g^-(T,T')$. It is clear that if $M\neq 0$ or $N\neq 0$,
then the sets $\mbox{Iso}_g^+(T,T')$ and $\mbox{Iso}_g^-(T,T')$
are disjoint. If $M=0$ and $N=0$, we have that
$\mbox{Iso}_g^+(T,T')=\mbox{Iso}_g^-(T,T')$. Obviously,
$\mbox{Iso}_g(T,T')$ can be empty, for instance in the case where
the rings $T$ and $T'$ are not isomorphic.

\vskip -0.5cm\qquad In the particular case where $T'=T$, we use
the notation $\mbox{Aut}_g^+(T)=\mbox{Iso}_g^+(T,T)$,
$\mbox{Aut}_g^-(T)=\mbox{Iso}_g^-(T,T)$ and
$\mbox{Aut}_g(T)=\mbox{Aut}_g^+(T)\cup \mbox{Aut}_g^-(T)$. We note
that $\mbox{Aut}_g^+(T)$ is always non-empty, since it contains
the identity morphism, while $\mbox{Aut}_g^-(T)$ may be empty. It
is easy to see that $\mbox{Aut}_g(T)$ is a subgroup of
$\mbox{Aut}(T)$, and $\mbox{Aut}_g^+(T)$ is a normal subgroup of
$\mbox{Aut}_g(T)$.

\vskip -0.5truecm\qquad We recall that a Morita context
$(R,S,M,N,f,g)$ is strict if $f$ and $g$ are surjective, and this
implies that $f$ and $g$ are isomorphisms. In this case the
bilinear maps $[,]$ and $(,)$ are left and right non-degenerate
(i.e. for example if $[m,N]=0$ for some $m\in M$, then $m=0$). Let
us note that if $\mbox{Iso}_0(T,T')\neq \emptyset$, then the
Morita context with which $T$ is associated is strict if and only
if so is the Morita context with which $T'$ is associated. Indeed,
if there exists $\phi \in \mbox{Iso}_0^0(T,T')$, associated with
the 6-tuple $(\gamma, \delta, u,v,m'_0,n'_0)$, then the relations
$[u(m), v(n) ] = \gamma ([m,n])$ and $(v(n), u(m)) = \delta( (n,
m))$, combined with the surjectivity of $u,v,\gamma,\delta$, show
that $f$ and $g$ are surjective if and only if $f'$ and $g'$ are
surjective. Similarly in the case where there exists $\phi \in
\mbox{Iso}_0^1(T,T')$.

\begin{proposition}  \label{autgstrict}
If the Morita context with which $T$ (or $T'$) is associated is
strict, then ${\rm Iso}_0(T,T')\subseteq {\rm Iso}_g(T)$.
\end{proposition}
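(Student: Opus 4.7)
The plan is to exploit the non-degeneracy coming from strictness to force the ``twist'' elements $m'_0, n'_0$ (respectively $m'_\ast, n'_\ast$) in the $6$-tuples of Propositions~\ref{propdefaut0} and~\ref{propdefaut1} to vanish, so that the isomorphisms in $\mbox{Iso}_0^0(T,T')$ and $\mbox{Iso}_0^1(T,T')$ collapse into graded and anti-graded isomorphisms respectively.

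First I would handle the trivial case: if $\mbox{Iso}_0(T,T')=\emptyset$, the inclusion is vacuous. Otherwise, by the observation recorded in the paragraph immediately preceding the proposition, strictness of the Morita context attached to $T$ is equivalent to strictness of the one attached to $T'$, so I may assume that $(R',S',M',N',f',g')$ is strict. Consequently the bilinear maps $[\,,\,]$ and $(\,,\,)$ of this context are left and right non-degenerate, which is the single ingredient I will use throughout.

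For $\phi\in\mbox{Iso}_0^0(T,T')$ corresponding to $(\gamma,\delta,u,v,m'_0,n'_0)$, condition~(i) of Proposition~\ref{propdefaut0} gives $[m'_0,N']=0$, and left non-degeneracy of $[\,,\,]$ forces $m'_0=0$; condition~(ii) gives $[M',n'_0]=0$, and right non-degeneracy forces $n'_0=0$. Substituting into the defining formula for $\phi$ reduces it to
$$\phi\left(\left[\begin{array}{cc} r & m\\ n & s\end{array}\right]\right)=\left[\begin{array}{cc} \gamma(r) & u(m)\\ v(n) & \delta(s)\end{array}\right],$$
which sends $T_{-1}\to T'_{-1}$, $T_0\to T'_0$, $T_1\to T'_1$; hence $\phi\in\mbox{Iso}_g^+(T,T')$. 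An identical argument applied to $\psi\in\mbox{Iso}_0^1(T,T')$ with $6$-tuple $(\rho,\sigma,\mu,\nu,m'_\ast,n'_\ast)$, using conditions~(i) and~(ii) of Proposition~\ref{propdefaut1}, yields $m'_\ast=n'_\ast=0$, so $\psi$ reduces to the ``swap'' map
$$\psi\left(\left[\begin{array}{cc} r & m\\ n & s\end{array}\right]\right)=\left[\begin{array}{cc} \sigma(s) & \nu(n)\\ \mu(m) & \rho(r)\end{array}\right],$$
which sends $T_i$ into $T'_{-i}$ for $i\in\{-1,0,1\}$; hence $\psi\in\mbox{Iso}_g^-(T,T')$. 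Taking the union of the two cases yields $\mbox{Iso}_0(T,T')\subseteq\mbox{Iso}_g(T,T')$.

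There is really no serious obstacle in this argument: the content lies entirely in recognizing that conditions~(i) and~(ii) of the two propositions are precisely the obstruction preventing the isomorphism from being homogeneous of degree $0$ or $0$-reversing, and that strictness converts these annihilation conditions into the vanishing of $m'_0,n'_0,m'_\ast,n'_\ast$. The only bookkeeping check worth a moment's attention is that both one-sided non-degeneracies (of $[\,,\,]$ and of $(\,,\,)$) are available under strictness, which is already noted in the paragraph preceding the proposition.
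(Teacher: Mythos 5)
Your proposal is correct and follows essentially the same route as the paper: reduce to the nonempty case, use the remark preceding the proposition to transfer strictness to the context of $T'$, and then use non-degeneracy of $[\,,\,]$ and $(\,,\,)$ to force $m'_0=n'_0=0$ (respectively $m'_\ast=n'_\ast=0$), so that the maps collapse to graded (respectively anti-graded) isomorphisms. No substantive differences.
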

\begin{proof} If $\mbox{Iso}_0(T,T')=\emptyset$, the result is clear. If
$\mbox{Iso}_0(T,T')\neq \emptyset$, then by the remark preceding
this proposition, the Morita context with which $T$ is associated
is strict if and only if so is the Morita context with which $T'$
is associated. Therefore we can assume that both these contexts
are strict.

\vskip -0.5truecm\qquad Let $\phi \in \mbox{Iso}_0^0(T,T')$ be
associated with the 6-tuple $(\gamma, \delta, u,v,m'_0,n'_0)$.
Since the Morita context is strict, the condition $[m'_0,N']=0$
implies that $m'_0=0$. Similarly, $n'_0=0$. Then
$\phi \left(\left[ \begin{array}{cc} r & m\\
n &s \end{array}\right]\right) = \left[ \begin{array}{cc}\gamma (r) & u(m) \\
v(n) & \delta (s)
\end{array}\right]$, and so $\phi
\in \mbox{Iso}_g^+(T,T')$. Similarly, any $\phi \in
\mbox{Iso}_0^1(T,T')$ is
of the form $\phi \left(\left[ \begin{array}{cc} r & m \\
n & s \end{array}\right]\right) = \left[ \begin{array}{cc}\sigma (s) & \nu (n) \\
\mu (m) & \rho(r) \end{array} \right]$ for some
$\rho,\sigma,\mu,\nu$, and so $\phi \in
\mbox{Iso}_g^-(T,T')$.\end{proof}

\qquad We recall that a ring $A$ is called indecomposable if it is
not isomorphic to a direct product of two rings with identity;
this is equivalent to the fact that the only central idempotents
of $A$ are 0 and 1. We will need the following simple fact.

\begin{lemma} \label{centralidempotents}
Let $T = \left[ \begin{array}{cc} R & M\\
 N & S \end{array} \right]$ be a Morita context ring. Then the
 central idempotent elements of $T$ are the matrices of the form $\left[ \begin{array}{cc} r & 0 \\
0  & s \end{array} \right]$, where $r$ and $s$ are central
idempotents of $R$ and~$S$ respectively, such that $rm=ms$ for all
$m\in M$, and $sn=nr$ for all $n\in N$.
\end{lemma}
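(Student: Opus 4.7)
The plan is to take an arbitrary central idempotent $e = \left[\begin{array}{cc} r & m \\ n & s \end{array}\right]$ of $T$ and test it against a small supply of elements whose entries are either $0$ or $1$ or generic. First I would commute $e$ with the idempotent $\left[\begin{array}{cc} 1 & 0 \\ 0 & 0 \end{array}\right]$: the product on one side is $\left[\begin{array}{cc} r & 0 \\ n & 0 \end{array}\right]$ while the product on the other side is $\left[\begin{array}{cc} r & m \\ 0 & 0 \end{array}\right]$, forcing $m = 0$ and $n = 0$. So any central idempotent is automatically diagonal, which sidesteps the Morita maps $[\,,\,]$ and $(\,,\,)$ entirely. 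Then $e^2 = e$ reduces to $r^2 = r$ and $s^2 = s$.

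Next, to show $r$ is central in $R$ and $s$ is central in $S$, I would take an arbitrary $r' \in R$ and commute $e$ with $\left[\begin{array}{cc} r' & 0 \\ 0 & 0 \end{array}\right]$; this yields $rr' = r'r$. The analogous test with $\left[\begin{array}{cc} 0 & 0 \\ 0 & s' \end{array}\right]$ produces $ss' = s's$. For the bimodule compatibility conditions, I would pick an arbitrary $m \in M$ and commute $e$ with $\left[\begin{array}{cc} 0 & m \\ 0 & 0 \end{array}\right]$: the two sides are $\left[\begin{array}{cc} 0 & rm \\ 0 & 0 \end{array}\right]$ and $\left[\begin{array}{cc} 0 & ms \\ 0 & 0 \end{array}\right]$, giving $rm = ms$. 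Symmetrically, commuting with $\left[\begin{array}{cc} 0 & 0 \\ n & 0 \end{array}\right]$ yields $sn = nr$ for every $n \in N$.

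For the converse, I would just verify directly that a matrix $\left[\begin{array}{cc} r & 0 \\ 0 & s \end{array}\right]$ satisfying the listed conditions is idempotent (immediate from $r^2=r$, $s^2=s$) and central: when multiplied on either side by a general $\left[\begin{array}{cc} r' & m' \\ n' & s' \end{array}\right]$, the centrality of $r$ in $R$ and $s$ in $S$ handles the diagonal entries, while $rm' = m's$ and $sn' = n'r$ handle the off-diagonal ones.

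The main obstacle I anticipate is essentially cosmetic rather than substantive: since the Morita maps $[\,,\,]$ and $(\,,\,)$ enter into the multiplication of $T$, one has to choose the test elements so that these maps evaluate to zero in the relevant products, which is why commuting against matrices of the form $\left[\begin{array}{cc} 1 & 0 \\ 0 & 0 \end{array}\right]$, $\left[\begin{array}{cc} r' & 0 \\ 0 & 0 \end{array}\right]$, $\left[\begin{array}{cc} 0 & m \\ 0 & 0 \end{array}\right]$, etc.\ is so effective. Once off-diagonal entries of $e$ are killed at the very first step, the rest of the argument is a routine four-line verification.
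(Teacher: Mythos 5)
Your proposal is correct and follows essentially the same route as the paper: commuting with $\left[\begin{array}{cc} 1 & 0 \\ 0 & 0\end{array}\right]$ to force the off-diagonal entries to vanish, then commuting with diagonal and off-diagonal test elements to extract centrality of $r$ and $s$ and the conditions $rm=ms$, $sn=nr$, followed by a direct verification of the converse. The only cosmetic difference is that the paper tests against a single matrix $\left[\begin{array}{cc} 0 & m \\ n & 0\end{array}\right]$ where you use the two off-diagonal pieces separately.
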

\begin{proof} By looking at the
commutation relations with $\left[ \begin{array}{cc} 1 & 0 \\
0  & 0 \end{array} \right]$, we see that a central element of~$T$
must be of the form $X=\left[ \begin{array}{cc} r & 0 \\
0  & s \end{array} \right]$, and it is clear that $r$ must be
central in $R$, and $s$ must be central in $S$. If moreover $X$ is
an idempotent, then $r$ and $s$ are central idempotents in $R$
and~$S$, respectively. Looking at the commutation relations with
an
elements of the form~$\left[ \begin{array}{cc} 0 & m \\
n  & 0 \end{array} \right]$, we get that $rm=ms$ for all $m\in M$,
and $sn=nr$ for all $n\in N$. Clearly any such matrix
$\left[ \begin{array}{cc} r & 0 \\
0  & s \end{array} \right]$ is a central idempotent of $T$.
\end{proof}

\qquad An immediate consequence is the following.

\begin{corollary} \label{corindecomposable}
Let $T = \left[ \begin{array}{cc} R & M\\
 N & S \end{array} \right]$ be a Morita context ring such that $R$
 and $S$ are indecomposable rings and not both $M$ and $N$ are
 zero. Then $T$ is an indecomposable ring.
 \end{corollary}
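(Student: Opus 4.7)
The plan is to apply Lemma \ref{centralidempotents} directly and then do a short case analysis on the possible values of the diagonal entries.

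By Lemma \ref{centralidempotents}, any central idempotent of $T$ has the form $\left[\begin{array}{cc} r & 0 \\ 0 & s \end{array}\right]$ where $r$ is a central idempotent of $R$, $s$ is a central idempotent of $S$, and the compatibility conditions $rm = ms$ for all $m \in M$ and $sn = nr$ for all $n \in N$ hold. Since $R$ and $S$ are indecomposable, we have $r \in \{0, 1\}$ and $s \in \{0, 1\}$, giving only four candidate pairs $(r,s)$. To conclude that $T$ is indecomposable I need to rule out the mixed pairs $(1,0)$ and $(0,1)$, leaving only the trivial central idempotents $0$ and $1$.

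For $(r,s) = (1,0)$, the condition $rm = ms$ becomes $m = 0$ for every $m \in M$, forcing $M = 0$, and $sn = nr$ becomes $0 = n$ for every $n \in N$, forcing $N = 0$. This contradicts the hypothesis that at least one of $M$ and $N$ is nonzero. The case $(r,s) = (0,1)$ is entirely symmetric, again forcing both $M$ and $N$ to vanish. Thus the only central idempotents of $T$ are $0$ and $1$, so $T$ is indecomposable.

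There is really no obstacle here; the result is an immediate consequence of the lemma together with a two-line case analysis, and the proof should be stated in just a few sentences.
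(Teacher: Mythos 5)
Your proof is correct and is exactly the argument the paper intends: the corollary is stated as ``an immediate consequence'' of Lemma \ref{centralidempotents} with no written proof, and your case analysis (indecomposability forces $r,s\in\{0,1\}$, and the mixed pairs are killed by the compatibility conditions $rm=ms$, $sn=nr$ together with the hypothesis that $M$ or $N$ is nonzero) is the standard way to fill it in.
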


\vskip -0.5truecm\qquad Another consequence of Lemma
\ref{centralidempotents} shows that
 $T$ can be indecomposable without both $R$ and $S$ being so.

 \begin{example}
 Let $R$ be a ring which is not indecomposable, and $S=\Z$. Let
 $M=R$, regarded as an $R$-$\Z$-bimodule, and $N=0$. Then using Lemma \ref{centralidempotents},
 it is easy to see that $T$ does not have non-trivial central
 idempotents, and so it is indecomposable.
 \end{example}

\qquad At this point we introduce two new types of isomorphisms
related to the graded structure of $T$ and $T'$.

\begin{definition} \label{defsemigraded}
Let $\phi:T\ra T'$ be an isomorphism between the Morita context
rings $T$ and $T'$. Then $\phi$ is
called\\
 $\bullet$ a semigraded isomorphism if $\phi(T_i)\subseteq T'_i$ for
all $i\in \{-1,1\}$. \\
$\bullet$ an anti-semigraded isomorphism if $\phi(T_i)\subseteq
T_{-i}'$ for all $i\in \{-1,1\}$.
\end{definition}

\vskip -0.5truecm \qquad Now we can prove the main result of this
section.

\begin{theorem}  \label{semigraded}
Let $T = \left[ \begin{array}{cc} R & M\\
 N & S \end{array} \right]$ and $T' = \left[ \begin{array}{cc} R' & M'\\
 N' & S' \end{array} \right]$ be Morita context rings such that $R'$ and $S'$ are
 indecomposable rings, and at
least one of $M'$ and $N'$ is nonzero. Then the following
assertions hold: \\
(1) ${\rm Iso}_0^0(T,T')$ is the set of all semigraded
isomorphisms from $T$ to $T'$.\\
(2) ${\rm Iso}_0^1(T,T')$ is the set of all anti-semigraded
isomorphisms from $T$ to $T'$. \\
In particular,
 ${\rm Iso}_g(T,T')\subseteq {\rm Iso}_0(T,T')$.
\end{theorem}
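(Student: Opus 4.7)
The plan is to establish (1) and (2) by double inclusion; the ``in particular'' statement ${\rm Iso}_g(T,T') \subseteq {\rm Iso}_0(T,T')$ follows at once, since a graded (resp.\ anti-graded) isomorphism is automatically semigraded (resp.\ anti-semigraded). The forward inclusions are routine: for $\phi \in {\rm Iso}_0^0(T,T')$ given by Proposition~\ref{propdefaut0}, substituting $r = s = n = 0$ into the formula produces an element of $T'_1$, and $r = s = m = 0$ produces an element of $T'_{-1}$, so $\phi$ is semigraded; the Proposition~\ref{propdefaut1} case is symmetric.

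For the nontrivial direction in (1), fix a semigraded $\phi$ and write $e' = \phi(e_{11}) = \left[ \begin{array}{cc} a & m'_0 \\ n'_0 & b \end{array} \right]$, so $\phi(e_{22}) = 1 - e'$. Since $e_{11}Te_{22} = T_1$ and $e_{22}Te_{11} = T_{-1}$, semigradedness yields $e'T'(1-e') \subseteq T'_1$ and $(1-e')T'e' \subseteq T'_{-1}$. Testing both inclusions with $w = \left[ \begin{array}{cc} r' & 0 \\ 0 & 0 \end{array} \right]$ and reading the $(1,1)$-entries gives $ar'(1-a) = 0$ and $(1-a)r'a = 0$ for all $r' \in R'$, whence $ar' = r'a$; together with the identities obtained by expanding $(e')^2 = e'$ (yielding in particular $a^2 = a$ and $[m'_0, n'_0] = 0$), this shows $a$ is a central idempotent of $R'$. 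By symmetry $b$ is a central idempotent of $S'$, so indecomposability forces $a, b \in \{0,1\}$.

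Next, rule out three of the four cases. The cases $(a,b) = (1,1)$ and $(0,0)$ give $e' = 1$ and $e' = 0$, contradicting $\phi(e_{22}) \neq 0$ and $\phi(e_{11}) \neq 0$. For $(a,b) = (0,1)$, write $\phi\left( \left[ \begin{array}{cc} 0 & m \\ 0 & 0 \end{array} \right] \right) = \left[ \begin{array}{cc} 0 & u(m) \\ 0 & 0 \end{array} \right]$ (semigraded); applying $\phi$ to the identity $e_{11} \cdot \left[ \begin{array}{cc} 0 & m \\ 0 & 0 \end{array} \right] = \left[ \begin{array}{cc} 0 & m \\ 0 & 0 \end{array} \right]$ gives $au(m) = u(m)$, which with $a = 0$ forces $u(m) = 0$ for all $m$, hence $M = 0$; symmetrically $N = 0$. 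But then $T \cong R \times S$ is decomposable, contradicting Corollary~\ref{corindecomposable} applied to $T' \cong T$. So $(a,b) = (1,0)$, i.e.\ $e' = \left[ \begin{array}{cc} 1 & m'_0 \\ n'_0 & 0 \end{array} \right]$.

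Now extract the 6-tuple. Testing $e'T'(1-e') \subseteq T'_1$ with $w = \left[ \begin{array}{cc} 0 & m' \\ 0 & 0 \end{array} \right]$ and $(1-e')T'e' \subseteq T'_{-1}$ with $w = \left[ \begin{array}{cc} 0 & 0 \\ n' & 0 \end{array} \right]$ yields conditions (i) and (ii) of Proposition~\ref{propdefaut0}. Define $u, v$ via the $(1,2)$- and $(2,1)$-entries of $\phi$ on $T_1, T_{-1}$; using (i) and (ii), a direct computation gives $\phi\left( \left[ \begin{array}{cc} R & 0 \\ 0 & 0 \end{array} \right] \right) = e'T'e' = \left\{ \left[ \begin{array}{cc} r' & r'm'_0 \\ n'_0 r' & 0 \end{array} \right] : r' \in R' \right\}$, so $\gamma : R \to R'$ is defined from the $(1,1)$-entry of the image, and symmetrically $\delta$. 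Multiplicativity of $\phi$ applied to products of homogeneous elements (such as $r_1 r_2 \in R$, $rms$ across the grading, and $mn \in T_1T_{-1}$) --- with the Morita associativity identities~(\ref{(1)}) used to kill off-diagonal residues (e.g.\ $[u(m),v(n)]m'_0 = u(m)(v(n),m'_0) = 0$) --- translates into the ring-homomorphism axioms for $\gamma, \delta$, the bimodule axioms for $u, v$, and condition (iii) of Proposition~\ref{propdefaut0}. Summing the images of the four homogeneous blocks of a general matrix then reproduces the formula of Proposition~\ref{propdefaut0}, so $\phi \in {\rm Iso}_0^0(T,T')$; this proves (1), and (2) follows either by a parallel argument or by invoking identity~(\ref{linkiso0iso1}) to reduce it to (1) applied to $T^{-1}$. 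The main obstacle throughout is the case analysis in the preceding paragraph: establishing centrality of $a,b$ requires probing both semigraded inclusions, and eliminating the mixed case $(0,1)$ is precisely where the full hypothesis on $R', S', M', N'$ enters through Corollary~\ref{corindecomposable}.
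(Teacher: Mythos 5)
Your proof is correct and follows essentially the same strategy as the paper's: analyze $e'=\phi(e_{11})$, show its diagonal entries $a,b$ are central idempotents, use indecomposability of $R'$ and $S'$ to reduce to four cases, eliminate all but $(a,b)=(1,0)$ via injectivity of $u,v$ together with the indecomposability of $T'$ from Corollary \ref{corindecomposable}, and then read off the $6$-tuple; part (2) is reduced to part (1) through $\alpha_T$ and (\ref{linkiso0iso1}) exactly as in the paper. Two remarks on the details. First, your route to centrality is slightly different and a bit cleaner: you test the inclusions $e'T'(1-e')\subseteq T'_1$ and $(1-e')T'e'\subseteq T'_{-1}$ against the diagonal matrix with entries $r'$ and $0$, obtaining $ar'(1-a)=0$ and $(1-a)r'a=0$, hence $ar'=r'a$; the paper instead pulls an arbitrary diagonal matrix of $T'$ back along $\phi$ and uses that it commutes with $\phi(e_{11})$. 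Second, the one genuine slip: expanding $(e')^2=e'$ gives only $a^2+[m'_0,n'_0]=a$, so it does not ``in particular'' yield $a^2=a$ and $[m'_0,n'_0]=0$ separately at that stage --- you derive conditions (i) and (ii) only after the case analysis, and your two off-diagonal tests suffice there precisely because $(a,b)=(1,0)$ is already known (the paper, working before the case analysis, needs four tests to combine $a[m',n'_0]=0$ with $(1-a)[m',n'_0]=0$, etc.). The slip is self-repairing, since setting $r'=1$ in your own relation $ar'(1-a)=0$ gives $a^2=a$ directly, and then $[m'_0,n'_0]=0$ follows; the justification should simply be rerouted through that specialization rather than through $(e')^2=e'$.
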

\begin{proof}
(1) It is clear that any $\phi \in \mbox{Iso}_0^0(T,T')$ is a
semigraded isomorphism.

\vskip -0.3truecm\qquad Let $\phi$ be a semigraded isomorphism
from $T$ to $T'$. Then there exist $u:M\ra M'$ and $v:N\ra N'$
such that

\vskip -0.5truecm

\begin{equation}\label{phim}
\phi \left(\left[ \begin{array}{ll} 0 & m\\
0 &0 \end{array}\right]\right) = \left[ \begin{array}{cc}0 & u(m) \\
0 & 0
\end{array}\right]\end{equation}
for every $m\in M$, and

\begin{equation} \label{phin}
\phi \left(\left[ \begin{array}{ll} 0 & 0\\
n &0 \end{array}\right]\right) = \left[ \begin{array}{cc}0 & 0 \\
v(n) & 0
\end{array}\right]
\end{equation}
for every $n\in N$. Clearly $u$ and $v$ are injective additive
morphisms.

\vskip -0.2truecm\qquad Let $\phi \left(\left[ \begin{array}{ll} 1 & 0\\
0 &0 \end{array}\right]\right) = \left[ \begin{array}{cc}r'_0 & m'_0 \\
n'_0 & s'_0
\end{array}\right]$. Since $ \left[ \begin{array}{ll} 1 & 0\\
0 &0 \end{array}\right]$ is an idempotent in $T$, we have
that $\left[ \begin{array}{cc}r'_0 & m'_0 \\
n'_0 & s'_0
\end{array}\right]$ is an idempotent in $T'$, which in particular
implies that

\vskip -0.8truecm

\begin{equation}\label{r0squared}
(r'_0)^2+[m'_0,n'_0]=r'_0
\end{equation}

\vskip -0.5truecm and \vskip -0.8truecm

\begin{equation}\label{s0squared}
(s'_0)^2+(n'_0,m'_0)=s'_0.
\end{equation}

\vskip-0.3truecm\qquad Since $\left[ \begin{array}{ll} 1 & 0\\
0 &0 \end{array}\right]T\left[ \begin{array}{ll} 0 & 0\\
0 &1 \end{array}\right]=\left[ \begin{array}{ll} 0 & M\\
0 &0 \end{array}\right]$, by applying $\phi$ we get that
\begin{equation} \label{new1}
\left[ \begin{array}{cc}r'_0 & m'_0 \\
n'_0 & s'_0
\end{array}\right]T'\left[ \begin{array}{cc}1-r'_0 & -m'_0 \\
-n'_0 & 1-s'_0
\end{array}\right]\subseteq \left[ \begin{array}{cc} 0 & M'\\
0 &0 \end{array}\right].
\end{equation}

\vskip -0.2truecm\qquad Equation (\ref{new1}) also shows that for
every $m'\in M'$,
$$\left[ \begin{array}{cc}r'_0 & m'_0 \\
n'_0 & s'_0
\end{array}\right]\left[ \begin{array}{cc} 0 & m'\\
0 &0 \end{array}\right]\left[ \begin{array}{cc}1-r'_0 & -m'_0 \\
-n'_0 & 1-s'_0
\end{array}\right]=\left[ \begin{array}{cc} -r'_0[m',n'_0] & r'_0m'(1-s'_0)\\
-(n'_0,m')n'_0 &(n'_0,m')(1-s'_0) \end{array}\right]\in \left[ \begin{array}{cc} 0 & M'\\
0 &0 \end{array}\right],$$ implying that

\vskip -0.8truecm

\begin{equation} \label{new2}
r'_0[m',n'_0]=0
\end{equation}

\vskip -0.5truecm and \vskip -0.8truecm

\begin{equation}\label{new3}
(n'_0,m')(1-s'_0)=0.
\end{equation}
\vskip -0.5truecm

Also, we have that for every $n'\in N'$, \vskip -0.8truecm
$$\left[ \begin{array}{cc}r'_0 & m'_0 \\
n'_0 & s'_0
\end{array}\right]\left[ \begin{array}{cc} 0 & 0\\
n' &0 \end{array}\right]\left[ \begin{array}{cc}1-r'_0 & -m'_0 \\
-n'_0 & 1-s'_0
\end{array}\right]=\left[ \begin{array}{cc} [m'_0,n'](1-r'_0) & -[m'_0,n']m'_0\\
s'_0n'(1-r'_0) &-s'_0(n',m'_0) \end{array}\right]\in \left[ \begin{array}{cc} 0 & M'\\
0 &0 \end{array}\right],$$ implying that

\vskip -0.8truecm

\begin{equation} \label{new2'}
[m'_0,n'](1-r'_0)=0
\end{equation}

\vskip -0.5truecm and \vskip -0.8truecm

\begin{equation}\label{new3'}
s'_0(n',m'_0)=0.
\end{equation}
\vskip -0.5truecm

Similarly, by using the fact that

\vskip -0.8truecm
$$\left[ \begin{array}{cc}1-r'_0 & -m'_0 \\
-n'_0 & 1-s'_0
\end{array}\right]T'\left[ \begin{array}{cc}r'_0 & m'_0 \\
n'_0 & s'_0
\end{array}\right]\subseteq \left[ \begin{array}{cc} 0 & 0\\
N' &0 \end{array}\right],$$ we obtain that

\vskip -0.8truecm

\begin{equation} \label{new4}
[m'_0,n']r'_0=0
\end{equation}

\vskip -0.5truecm and \vskip -0.8truecm

\begin{equation}\label{new5}
(1-s'_0)(n',m'_0)=0,
\end{equation}

\vskip -0.5truecm as well as that \vskip -0.8truecm

\begin{equation} \label{new4'}
(1-r'_0)[m',n'_0]=0
\end{equation}

\vskip -0.5truecm and \vskip -0.8truecm

\begin{equation}\label{new5'}
(n'_0,m')s'_0=0
\end{equation}

\vskip -0.5truecm for every $m'\in M'$ and $n'\in N'$.

\vskip -0.3truecm\qquad Adding in pairs relations (\ref{new2}) and
(\ref{new4'}), (\ref{new3}) and (\ref{new5'}), (\ref{new2'}) and
(\ref{new4}), (\ref{new3'}) and (\ref{new5}), we see that

\vskip -0.8truecm

\begin{equation}\label{new14}
[m',n'_0]=0, \quad (n'_0,m')=0, \quad [m'_0,n']=0, \quad
(n',m'_0)=0,
\end{equation}
for every $m'\in M'$ and $n'\in N'$.

\vskip -0.3truecm\qquad Now if we use (\ref{new14}) in
(\ref{r0squared}) and (\ref{s0squared}), we find that $r'_0$ is an
idempotent of $R'$, and $s'_0$ is an idempotent of $S'$.

\vskip -0.3truecm\qquad If we apply $\phi$ to the relation $\left[ \begin{array}{cc} 1 & 0\\
0 &0 \end{array}\right] \left[ \begin{array}{cc} 0 & m\\
0 &0 \end{array}\right]=\left[ \begin{array}{cc} 0 & m\\
0 &0 \end{array}\right]$, where $m\in M$, we obtain that


\begin{equation}\label{new7}
r'_0u(m)=u(m)
\end{equation}
for every $m\in M$.

\qquad Similarly, if we apply $\phi$ to the relation $\left[ \begin{array}{cc} 0 & m\\
0 &0 \end{array}\right] \left[ \begin{array}{cc} 1 & 0\\
0 &0 \end{array}\right]=0$, we obtain that


\begin{equation} \label{new9}
u(m)s'_0=0
\end{equation}
for every $m\in M$.

\vskip -0.3truecm \qquad Similarly, we get that

\vskip -0.8truecm

\begin{equation} \label{new12}
v(n)r'_0=v(n)
\end{equation}

\vskip -0.5truecm and \vskip -0.8truecm

\begin{equation} \label{new13}
s'_0v(n)=0
\end{equation}
for every $n\in N$.

\vskip -0.5truecm \qquad Now fix some $a'\in R'$, $b'\in S'$.
Since $\phi$ is surjective, there
exists $\left[ \begin{array}{cc} r & p\\
q &s \end{array}\right]\in T$ such that $\phi \left(\left[ \begin{array}{cc} r & p\\
q &s \end{array}\right]\right)=\left[ \begin{array}{cc} a' & 0\\
0 &b' \end{array}\right]$. Then $\phi \left(\left[ \begin{array}{cc} r & 0\\
0 &s \end{array}\right]\right)=\left[ \begin{array}{cc} a' & -u(p)\\
-v(q) &b' \end{array}\right]$. Apply $\phi$ to the relation
$$\left[ \begin{array}{cc} 1 & 0\\
0 &0 \end{array}\right]\left[ \begin{array}{cc} r & 0\\
0 &s \end{array}\right]=\left[ \begin{array}{cc} r & 0\\
0 &s \end{array}\right]\left[ \begin{array}{cc} 1 & 0\\
0 &0 \end{array}\right]$$ and get
$$\left[ \begin{array}{cc}r'_0 & m'_0 \\
n'_0 & s'_0
\end{array}\right]\left[ \begin{array}{cc} a' & -u(p)\\
-v(q) &b' \end{array}\right]=\left[ \begin{array}{cc} a' & -u(p)\\
-v(q) &b' \end{array}\right]
\left[ \begin{array}{cc}r'_0 & m'_0 \\
n'_0 & s'_0
\end{array}\right].$$
In particular, this implies that
$$r'_0a'-[m'_0,v(q)]=a'r'_0-[u(p),n'_0],$$
 which in view of (\ref{new14}) shows that $r'_0a'=a'r'_0$. Also
we get that $$ -(n'_0,u(p))+s'_0b'=-(v(q),m'_0)+b's'_0,$$ which
shows that $s'_0b'=b's'_0$. Thus $r'_0$ is a central idempotent in
$R'$ and $s'_0$ is a central idempotent in $S'$. Since $R'$ and
$S'$ are indecomposable rings, we conclude that $r'_0$ must be
either 0 or 1 and $s'_0$ must be either 0 or 1.

\vskip -0.4truecm\qquad On the other hand, $T'$ is an
indecomposable ring by Corollary \ref{corindecomposable}. This
implies that not both $M$ and $N$ are zero, otherwise $T\simeq
R\times S$, which is not indecomposable.

\vskip -0.4truecm\qquad Since $r'_0u(m)=u(m)$ for all $m\in M$ by
(\ref{new7}), $v(n)r'_0=v(n)$ for all $n\in N$ by (\ref{new12}),
and at least one of $M$ and $N$ is nonzero, we can not have
$r'_0=0$ (otherwise $u$ or $v$ could not be injective); therefore,
$r'_0=1$. Similarly, since $u(m)s'_0=0$ for all $m\in M$ by
(\ref{new9}), and $s'_0v(n)=0$ for all $n\in N$ by (\ref{new13}),
we must have $s'_0=0$. We have obtained that
$$\phi \left(\left[ \begin{array}{cc} 1 & 0\\
0 &0 \end{array}\right]\right) = \left[ \begin{array}{cc}1 & m'_0 \\
n'_0 & 0
\end{array}\right].$$

\vskip -0.3truecm \qquad Next, let $r \in R$. Then
$$\phi \left(\left[ \begin{array}{cc} r & 0 \\
0  & 0 \end{array} \right]\right) = \left[ \begin{array}{cc} r'_1 & m'_1 \\
n'_1 & s'_1 \end{array}\right]$$ for some $r'_1 \in R', m'_1 \in
M', n'_1 \in N'$ and $s'_1 \in S'$. Apply $\phi$ to the relations
$$\left[ \begin{array}{cc} r & 0 \\
0 &0 \end{array}\right] \ \left[ \begin{array}{cc} 1 & 0 \\
0 &0 \end{array}\right] = \left[ \begin{array}{cc} r & 0 \\
0 &0 \end{array}\right] = \left[ \begin{array}{cc} 1 &0 \\
0 & 0 \end{array}\right] \ \left[ \begin{array}{cc} r & 0 \\
0 &0 \end{array}\right],$$ and obtain that
$$\left[ \begin{array}{cc} r'_1  & m'_1 \\
n'_1 & s'_1 \end{array}\right] \ \left[ \begin{array}{cc} 1 & m'_0\\
n'_0 &0 \end{array}\right] = \left[ \begin{array}{cc} r'_1 & m'_1 \\
n'_1& s'_1 \end{array}\right] = \left[ \begin{array}{cc} 1 & m'_0 \\
n'_0 & 0 \end{array}\right] \ \left[ \begin{array}{cc} r'_1 & m'_1 \\
n'_1 & s'_1 \end{array}\right],$$ and so, using (\ref{new14}), we
obtain that
$$
\left[ \begin{array}{cc} r'_1  & r'_1 m'_0 \\
n'_1 + s'_1 n'_0 & 0 \end{array}\right] = \left[ \begin{array}{cc} r'_1 & m'_1 \\
n'_1 & s'_1 \end{array}\right] = \left[ \begin{array}{cc} r'_1  & m'_1 + m'_0s'_1 \\
n'_0r'_1 & 0\end{array} \right].$$ This shows that $$ m'_1 = r'_1
m'_0,\; \; s'_1 = 0,\; n'_1 = n'_0 r'_1,$$ and so
$$\phi \left(\left[ \begin{array}{cc} r& 0 \\
0 & 0 \end{array}\right]\right) = \left[ \begin{array}{cc} r'_1 & r'_1m'_0 \\
n'_0r'_1 & 0 \end{array}\right].$$ Consequently, if we define
$\gamma : R \ra R'$ by $\gamma (r) = r'_1$, then
\begin{equation}
\phi \left(\left[ \begin{array}{cc} r &0 \\
0 &0 \end{array}\right]\right) = \left[ \begin{array}{cc} \gamma (r) & \gamma (r) m'_0\\
n'_0 \gamma (r) & 0 \end{array} \right] \label{24}
\end{equation}
for all $r \in R$. Moreover, it is clear from (\ref{24}) that
$\gamma$ is additive and injective.

\vskip -0.4truecm\qquad Applying $\phi$ to
$$\left[ \begin{array}{ll} r_1 & 0 \\
0 &0 \end{array}\right] \ \left[ \begin{array}{cc} r_2 &0 \\
0  &0 \end{array}\right] = \left[ \begin{array}{cc} r_1r_2 & 0 \\
0 & 0 \end{array}\right], \quad r_1, r_2 \in R,$$ we get
$$\left[ \begin{array}{cc} \gamma (r_1) & \gamma (r_1) m'_0 \\
n'_0 \gamma (r_1) & 0 \end{array}\right] \ \left[ \begin{array}{cc} \gamma (r_2) & \gamma (r_2)m'_0 \\
n'_0 \gamma (r_2) & 0 \end{array}\right] = \left[ \begin{array}{cc} \gamma (r_1r_2) & \gamma (r_1r_2) m'_0 \\
n'_0 \gamma (r_1r_2) & 0 \end{array}\right].$$ Looking at position
$(1,1)$, and taking into account that  $[\gamma (r_1) m'_0, n'_0
\gamma (r_2) ] = \gamma (r_1)[m'_0,n'_0]\gamma (r_2)=0$, we find
that $\gamma$ is multiplicative. Therefore $\gamma$ is an
injective ring morphism.

\vskip -0.5truecm \qquad Now
$$\phi \left(\left[ \begin{array}{cc} 0 &0 \\
0 &1 \end{array}\right]\right)= \left[ \begin{array}{cc} 0 & -m'_0 \\
-n'_0 & 1 \end{array} \right],$$ and so similar arguments as above
provide a map $\delta : \ S \ra S'$ such that
\begin{equation}
\phi \left(\left[ \begin{array}{cc} 0 & 0 \\ 0 & s \end{array}
\right]\right) =
\left[ \begin{array}{cc} 0 & -m'_0 \delta (s) \\
-\delta (s) n'_0 & \delta (s) \end{array} \right] \label{25}
\end{equation}
for all $s \in S$. Also, we obtain in a similar way to $\gamma$
that $\delta$ is an injective ring morphism.

\vskip -0.3truecm\qquad We conclude from (\ref{phim}),
(\ref{phin}), (\ref{24}) and (\ref{25}) that
\begin{equation}
\phi \left(\left[ \begin{array}{cc} r & m \\
n & s \end{array} \right] \right)= \left[ \begin{array}{cc} \gamma
(r)
& \gamma (r) m'_0 - m'_0 \delta (s) + u (m) \\
n'_0 \gamma (r) - \delta (s) n'_0 + v(n) & \delta (s)
\end{array} \right] \label{26}
\end{equation}
for all $r \in R, s \in S, m \in M, n \in N$.

\vskip -0.2truecm \qquad We have that $\gamma$ and $\delta$ are
surjective. Indeed, let $r' \in R', \ s'\in S'$.
Then there exists $\left[ \begin{array}{cc} r & m \\
n &  s \end{array} \right] \in T$ such that
$$
\left[ \begin{array}{cc} r' & 0\\
0 & s' \end{array} \right] = \phi \left(\left[ \begin{array}{cc} r & m \\
n & s \end{array} \right] \right)= \left[ \begin{array}{cc} \gamma (r)  & \gamma (r) m'_0 - m'_0 \delta (s) + u(m) \\
n'_0 \gamma (r) - \delta(s) n'_0 + v(n) & \delta (s)
\end{array}\right], $$
and so $\gamma(r)=r'$ and $\delta(s)=s'$. Therefore $\gamma:R\ra
R'$ and $\delta:S\ra S'$ are ring isomorphisms.

\vskip -0.5truecm \qquad We also have that $u$ and $v$ are
surjective. Indeed, for $m' \in
M'$ and $n'\in N'$ there is $\left[ \begin{array}{cc} r & m \\
n &  s \end{array} \right] \in T$ such that
$$
\left[ \begin{array}{cc} 0 & m'\\
n' & 0 \end{array} \right] = \phi \left(\left[ \begin{array}{cc} r & m \\
n & s \end{array} \right] \right)= \left[ \begin{array}{cc} \gamma (r)  & \gamma (r) m'_0 - m'_0 \delta (s) + u(m) \\
n'_0 \gamma (r) - \delta(s) n'_0 + v(n) & \delta (s)
\end{array}\right]. $$
Since $\gamma$ and $\delta$ are injective, we get that $r=0$ and
$s=0$, and then $m'=u(m)$ and $n'=v(n)$.

\vskip -0.4truecm \qquad Next we show that $u$ is a
$\gamma$-$\delta$-bimodule isomorphism, i.e.~$u(rms) = \gamma (r)
u(m) \delta (s)$ for all $r \in R, \ m \in M, \ s \in S$, and $v$
is a $\delta$-$\gamma$-bimodule isomorphism. Let $r \in R$, and
let $m \in M$. If we apply $\phi$ to
$$\left[ \begin{array}{cc} r&0 \\
0 & 0 \end{array} \right] \ \left[ \begin{array}{cc} 0 & m\\
0  & 0 \end{array}\right] = \left[ \begin{array}{cc} 0 & rm\\
0 &0  \end{array}\right],$$ then by (23)
$$\left[ \begin{array}{cc}
\gamma (r) & \gamma (r) m'_0 \\
n'_0 \gamma (r)& 0 \end{array}\right] \ \left[ \begin{array}{cc} 0  & u(m) \\
0 & 0 \end{array}\right] = \left[ \begin{array}{cc} 0 & u(rm)\\0 &
0 \end{array} \right],$$ and so it follows from position $(1,2)$
that
$$
u(rm) = \gamma (r) u(m).$$ Similarly,
$$
u(ms) =u(m) \delta (s), \quad v (sn) = \delta (s) v(n) \quad
\mbox{and} \quad v(nr) = v(n) \gamma (r) $$ for all $r \in R, \ s
\in S, \ m \in M, \ n \in N$.

\vskip -0.3truecm \qquad Finally we show that (iii) in Proposition
\ref{propdefaut0} is satisfied. Let $m \in M, \ n \in N$. Applying
$\phi$ to
$$\left[ \begin{array}{cc}  0 & m \\
0 &0  \end{array}\right] \ \left[ \begin{array}{cc} 0 & 0 \\
n   &0 \end{array} \right] = \left[ \begin{array}{cc} [m,n] & 0 \\
0  & 0 \end{array}\right],$$ we obtain that
$$\left[ \begin{array}{cc} 0 & u (m) \\
0 &0 \end{array}\right] \ \left[ \begin{array}{cc} 0  &0\\
v(n) & 0  \end{array}\right] = \left[ \begin{array}{cc} \gamma ([m,n]) & \gamma ([m,n])m'_0 \\
n'_0 \gamma ([m,n]) & 0 \end{array}\right],$$ and so $[u(m), v(n)
] = \gamma ([m,n])$. Similarly,  $(v(n), u(m)) = \delta ((n,m))$.
We thus conclude that $\phi \in \mbox{Iso}_0^0(T,T')$.\\

\vskip -0.5truecm \qquad (2) It is clear that any $\phi \in
\mbox{Iso}_0^1(T,T')$ is an anti-semigraded isomorphism.
Conversely, let $\phi$ be an anti-semigraded isomorphism from $T$
to $T'$. If $\alpha_T :T\ra T^{-1}$ is the canonical isomorphism,
then $\phi \alpha_T^{-1}:T^{-1}\ra T'$ is a semigraded
isomorphism, so by the first part of the theorem we have that
$\phi \alpha_T^{-1} \in \mbox{Iso}_0^0(T^{-1},T')$. Then $\phi \in
\mbox{Iso}_0^0(T^{-1},T')\alpha_T=\mbox{Iso}_0^1(T,T')$ by the
first equality in equation (\ref{linkiso0iso1}).\end{proof}

\begin{remark} \label{remsemigraded}
Assume that $R,S,R'$ and $S'$ are indecomposable rings, and all of
$M,N,M'$ and $N'$ are zero. Then identifying $T=R\times S$ and
$T'=R'\times S'$, it is easy to see that any isomorphism
$\phi:T\ra T'$ is either of the form $\phi
(r,s)=(\gamma(r),\delta(s))$ for some isomorphisms $\gamma:R\ra
R'$ and $\delta:S\ra S'$, or of the form $\phi
(r,s)=(\sigma(s),\rho(r))$ for some isomorphisms $\rho:R\ra S'$
and $\sigma:S\ra R'$. Therefore we have that ${\rm Iso}(T,T')={\rm
Iso}_g(T,T')={\rm Iso}_g^+(T,T')={\rm Iso}_g^-(T,T')$, and this is
the set of all semigraded isomorphisms from $T$ to $T'$ (and also
the set of all anti-semigraded isomorphisms from $T$ to $T'$).
Also it is the disjoint union of ${\rm Iso}_0^0(T,T')$ and ${\rm
Iso}_0^1(T,T')$, so ${\rm Iso}(T,T')={\rm Iso}_g(T,T')={\rm
Iso}_0(T,T')$.
\end{remark}

\begin{corollary} \label{autgaut0}
If $R,S,R'$ and $S'$ are indecomposable, then ${\rm
Iso}_g(T,T')\subseteq {\rm Iso}_0(T,T')$.
\end{corollary}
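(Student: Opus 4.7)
The plan is to reduce the corollary to Theorem \ref{semigraded} by a case split on whether $T'$ has nontrivial off-diagonal bimodules, and to handle the degenerate case via Remark \ref{remsemigraded}.

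First I would note that any graded isomorphism is in particular semigraded (the condition $\phi(T_i)\subseteq T'_i$ for all $i\in\Z$ certainly implies it for $i\in\{-1,1\}$), and symmetrically any anti-graded isomorphism is anti-semigraded. So the natural inclusions $\mbox{Iso}_g^+(T,T')\subseteq \{\text{semigraded}\}$ and $\mbox{Iso}_g^-(T,T')\subseteq \{\text{anti-semigraded}\}$ hold unconditionally.

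Case 1: at least one of $M'$ and $N'$ is nonzero. Then Theorem \ref{semigraded} applies verbatim: the set of semigraded isomorphisms coincides with $\mbox{Iso}_0^0(T,T')$ and the set of anti-semigraded isomorphisms coincides with $\mbox{Iso}_0^1(T,T')$. Combined with the inclusions from the previous paragraph, this yields $\mbox{Iso}_g(T,T')\subseteq \mbox{Iso}_0^0(T,T')\cup \mbox{Iso}_0^1(T,T')=\mbox{Iso}_0(T,T')$.

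Case 2: $M'=0$ and $N'=0$. This is the only case not covered by Theorem \ref{semigraded}, and is the place where a bit of extra care is needed. Here $T'_1=T'_{-1}=0$, so if $\phi\in\mbox{Iso}_g^+(T,T')$, then $\phi(T_1)\subseteq T'_1=0$ and $\phi(T_{-1})\subseteq T'_{-1}=0$; by injectivity of $\phi$ this forces $T_1=T_{-1}=0$, that is $M=N=0$. The same argument works for $\phi\in\mbox{Iso}_g^-(T,T')$. Thus we are in the situation of Remark \ref{remsemigraded}, which (under the indecomposability hypothesis on all four diagonal rings) gives $\mbox{Iso}(T,T')=\mbox{Iso}_g(T,T')=\mbox{Iso}_0(T,T')$, whence the desired inclusion is trivial. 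Combining the two cases finishes the proof; the only real obstacle is remembering to invoke Remark \ref{remsemigraded} for the degenerate case rather than trying to apply Theorem \ref{semigraded} directly.
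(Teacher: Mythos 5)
Your proof is correct and follows essentially the same route as the paper: the same case split on whether $M'$ and $N'$ both vanish, with Theorem \ref{semigraded} handling the generic case and Remark \ref{remsemigraded} the degenerate one. The only (harmless) difference is that in the degenerate case you deduce $M=N=0$ from the injectivity of a graded isomorphism, whereas the paper deduces it from the fact that otherwise $T$ would be indecomposable (Corollary \ref{corindecomposable}) while $T'\simeq R'\times S'$ is not.
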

\begin{proof} The result is clear if
$\mbox{Iso}_g(T,T')=\emptyset$. Assume now that
$\mbox{Iso}_g(T,T')$ is non-empty, implying that $T$ and $T'$ are
isomorphic. If not both $M'$ and $N'$ are zero, then the result
follows from Theorem \ref{semigraded}. If $M'=0$ and $N'=0$, then
we also must have $M=0$ and $N=0$ (otherwise $T$ is indecomposable
by Corollary \ref{corindecomposable}, while $T'$ is not so), and
the result follows from Remark \ref{remsemigraded}.
\end{proof}

\begin{corollary}
If $R,S,R'$ and $S'$ are indecomposable rings and the Morita
context with which $T$ (or $T'$) is asssociated is strict, then
${\rm Iso}_g(T,T')={\rm Iso}_0(T,T')$.
\end{corollary}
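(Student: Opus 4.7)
The plan is to obtain the equality $\mbox{Iso}_g(T,T') = \mbox{Iso}_0(T,T')$ by simply splicing together the two inclusions that have already been established separately earlier in this section. The two hypotheses of the corollary---strictness of (at least) one of the two Morita contexts, and indecomposability of $R,S,R'$ and $S'$---have been chosen precisely so that each direction of the equality becomes an immediate consequence of a result already in hand.

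For the inclusion $\mbox{Iso}_0(T,T') \subseteq \mbox{Iso}_g(T,T')$, I would cite Proposition \ref{autgstrict}, which asserts exactly this inclusion under the strictness assumption alone. The remark preceding that proposition also ensures that strictness of one context forces strictness of the other as soon as $\mbox{Iso}_0(T,T')$ is nonempty, so the hypothesis ``$T$ (or $T'$)'' is unambiguous. No further work is needed for this half.

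For the reverse inclusion $\mbox{Iso}_g(T,T') \subseteq \mbox{Iso}_0(T,T')$, I would cite Corollary \ref{autgaut0}, which asserts exactly this inclusion under the indecomposability of $R,S,R'$ and $S'$. Combining the two inclusions yields the claimed equality.

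Since the corollary is a direct juxtaposition of two already-proven statements whose hypotheses match precisely what is assumed here, there is no genuine obstacle to overcome. The only small point worth keeping in mind is that the empty case is handled automatically: if either $\mbox{Iso}_g(T,T')$ or $\mbox{Iso}_0(T,T')$ is empty, then by whichever of the two inclusions applies, the other side is empty as well, and the equality is vacuous.
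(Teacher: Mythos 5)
Your proof is correct and matches the paper's own argument exactly: the paper also obtains the equality by combining Proposition \ref{autgstrict} (for ${\rm Iso}_0(T,T')\subseteq {\rm Iso}_g(T,T')$ under strictness) with Corollary \ref{autgaut0} (for the reverse inclusion under indecomposability). Your extra remarks on the empty case and on strictness transferring between the two contexts are consistent with the paper's earlier discussion and require no changes.
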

\begin{proof} It follows from Proposition \ref{autgstrict} and Corollary
\ref{autgaut0}.
\end{proof}

\begin{remark}
If $R$ and $S$ are not indecomposable, then ${\rm Aut}_g(T)$ is
not necessarily contained in ${\rm Aut}_0(T)$. Indeed, consider,
for example, the case where $R=S\times S$, $M=0$ and $N=0$. Then
we can identify $T$ with $S\times S\times S$ , and the map
$\phi:T\ra T$, $\phi (s_1,s_2,s_3)=(s_1,s_3,s_2)$, $s_1,s_2,s_3\in
S$, is an automorphism of $T$, which is clearly graded, since
$T=T_0$, but it is easy to see that $\phi\notin {\rm Aut}_0(T)$.
\end{remark}

\section{Automorphisms in the case of zero Morita maps}

\vskip -0.3truecm\qquad Let $T = \left[ \begin{array}{cc} R & M\\
 N & S \end{array} \right]$ and $T' = \left[ \begin{array}{cc} R' & M'\\
 N' & S' \end{array} \right]$ be Morita context rings. We have seen that in general
$\mbox{Iso}_0(T,T')$ is smaller than $\mbox{Iso}(T,T')$. Remark
\ref{remaut0aut} shows that if
$\mbox{Iso}_0(T,T')=\mbox{Iso}(T,T')\neq \emptyset$, then
necessarily all the Morita maps in both Morita contexts are zero.
In this section we show that the converse also holds, provided
that the rings $R$ and $S$ have only 0 and 1 as idempotents.

\vskip -0.5truecm \qquad Throughout this section $T$ and $T'$ will
be Morita context rings as above such that $R,S,R'$ and $S'$ have
only 0 and 1 as idempotents, and the Morita maps are zero in each
of the two contexts with which $T$ and $T'$ are associated.

\vskip -0.5truecm \qquad First note that a matrix $\left[ \begin{array}{cc} r & m \\
n &s \end{array} \right]$ in $T$ is idempotent if and only if $$
r^2 = r, \quad s^2 = s , \quad rm+ms = m \quad \mbox{and} \quad nr
+ sn = n.$$
 Therefore, apart from the trivial idempotents $\left[ \begin{array}{cc}0 & 0 \\
0 & 0 \end{array} \right]$ and $\left[ \begin{array}{cc}1 & 0 \\
0 & 1 \end{array} \right]$, the other idempotents of~$T$ are of
the form
\begin{equation} \label{8}
\left[ \begin{array}{cc}1 & m \\
n & 0 \end{array} \right] \quad \mbox{or} \quad \left[ \begin{array}{cc} 0 & m \\
n & 1 \end{array}\right]. \end{equation}

\vskip -0.3truecm The two types of idempotents in (\ref{8}) will
henceforth be called type 1 idempotents and type 2 idempotents
respectively.

\vskip -0.3cm \qquad We look at the action of an isomorphism
$\phi:T\ra T'$ on an idempotent $E\in T$ of the form
$$\left[ \begin{array}{cc} 1& m \\
0 & 0 \end{array} \right], \quad \left[ \begin{array}{cc} 1 & 0 \\
n & 0 \end{array} \right], \quad \left[ \begin{array}{cc} 0 & m \\
0 & 1 \end{array} \right] \quad \mbox{or} \quad \left[ \begin{array}{cc} 0& 0 \\
n & 1 \end{array} \right].$$ Note that the first two are type 1
idempotents, and the last two are type 2 idempotents. Since $\phi$
is injective and $\phi (E)$ is also an idempotent, we have that
$\phi (E)$ has one of the forms in (\ref{8}) inside the ring $T'$.

\begin{lemma} \label{lemma1}
If there exists $m_1\in M$ such that $\phi$ maps the type 1 idempotent $\left[ \begin{array}{cc} 1 & m_1\\
0 & 0 \end{array} \right]$ to a type~2 idempotent, then \vskip
-0.5cm
(a) $\phi$ maps every type 1 idempotent of the form $\left[ \begin{array}{cc} 1 & m \\
0 & 0 \end{array} \right]$ to a type 2 idempotent; and \vskip
-0.5cm (b) there is a fixed element $a'_2 \in M'$ and a map $v_2:
M \ra N'$ such that, for all $m \in M$,
$$\phi \left(\left[ \begin{array}{cc} 1& m\\
0 & 0 \end{array} \right] \right)= \left[ \begin{array}{cc} 0 & a'_2\\
v_2(m) & 1 \end{array}\right].$$
\end{lemma}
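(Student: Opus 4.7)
The plan is to exploit the absorbing multiplicative relations among the ``first-row'' idempotents $E_m := \left[\begin{array}{cc} 1 & m \\ 0 & 0\end{array}\right]$. Because both Morita maps vanish in the context for $T$, direct matrix computation gives
\[
E_m E_{m'} = E_{m'} \quad \text{and} \quad E_{m'} E_m = E_m
\]
for all $m,m' \in M$; the analogous vanishing of the Morita maps in $T'$ will govern how these products translate through $\phi$. Apply $\phi$ to these two identities to get $\phi(E_m)\phi(E_{m'}) = \phi(E_{m'})$ and $\phi(E_{m'})\phi(E_m) = \phi(E_m)$. Since $\phi$ is injective, each $\phi(E_m)$ is a nontrivial idempotent of $T'$, hence (by the classification preceding the lemma) either type 1 or type 2.

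For part (a), fix an arbitrary $m \in M$ and suppose for contradiction that $\phi(E_m) = \left[\begin{array}{cc} 1 & m' \\ n' & 0 \end{array}\right]$ is of type 1, while by hypothesis $\phi(E_{m_1}) = \left[\begin{array}{cc} 0 & a' \\ b' & 1 \end{array}\right]$ is of type 2. Using that the Morita maps in $T'$ are zero (so $[m',b']=0$ and $(n',a')=0$), direct computation gives
\[
\phi(E_m)\phi(E_{m_1}) = \left[\begin{array}{cc} 0 & a' + m' \\ 0 & 0 \end{array}\right].
\]
But this product equals $\phi(E_{m_1})$, whose $(2,2)$-entry is $1$, not $0$ --- contradiction. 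Hence $\phi(E_m)$ must be of type 2 for every $m \in M$.

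For part (b), write $\phi(E_m) = \left[\begin{array}{cc} 0 & a'_m \\ b'_m & 1 \end{array}\right]$ with $a'_m \in M'$ and $b'_m \in N'$. Apply $\phi$ to $E_m E_{m_1} = E_{m_1}$; using the vanishing of the Morita maps in $T'$ again, the product on the left simplifies to
\[
\left[\begin{array}{cc} 0 & a'_m \\ b'_{m_1} & 1 \end{array}\right],
\]
and comparing with $\phi(E_{m_1}) = \left[\begin{array}{cc} 0 & a'_{m_1} \\ b'_{m_1} & 1 \end{array}\right]$ forces $a'_m = a'_{m_1}$. Thus $a'_m$ is independent of $m$, so setting $a'_2 := a'_{m_1}$ and $v_2(m) := b'_m$ yields the required formula. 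The only real obstacle is simply to be meticulous with the $2 \times 2$ multiplications and to invoke the zero-Morita-maps hypothesis at exactly the right places; there is no deeper combinatorial difficulty.
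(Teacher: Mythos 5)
Your proof is correct and follows essentially the same route as the paper: both arguments exploit the absorbing products among the idempotents $E_m=\left[\begin{array}{cc}1 & m\\ 0 & 0\end{array}\right]$, apply $\phi$, and compare matrix entries (you use $E_mE_{m_1}=E_{m_1}$ and position $(2,2)$ where the paper uses $E_{m_1}E_m=E_m$ and positions $(1,1)$ and $(1,2)$, a purely cosmetic difference). No gaps.
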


\begin{proof}
Let $\phi \left(\left[ \begin{array}{cc} 1 & m_1\\
0 & 0 \end{array}\right]\right) = \left[ \begin{array}{cc} 0 & m_1'\\
n_1' & 1 \end{array}\right]$ for some $m_1' \in M', \ n_1' \in
 N'$. \vskip -0.5cm
\qquad (a) Suppose there is an $m \in M$ such that $\phi \left(\left[ \begin{array}{cc} 1 & m\\
0 &0 \end{array}\right]\right)$ is a type 1 idempotent, i.e. $\phi \left(\left[ \begin{array}{cc} 1 & m\\
0 &0 \end{array} \right]\right) = \left[ \begin{array}{cc} 1 & m'\\
n' & 0 \end{array}\right]$ for some $m' \in M', \ n' \in N'$.
Since
$\left[ \begin{array}{cc} 1 & m_1\\
0 & 0 \end{array} \right] \ \left[ \begin{array}{cc} 1 & m \\
0 & 0 \end{array} \right] = \left[ \begin{array}{cc} 1 &m \\
0 &0 \end{array} \right]$, 
applying $\phi$ yields
\begin{equation}
\left[ \begin{array}{ll} 0 & m_1'\\
n_1' & 1 \end{array} \right] \ \left[ \begin{array}{ll} 1 & m' \\
n' & 0 \end{array} \right] = \left[ \begin{array}{ll} 1& m' \\
n' & 0 \end{array} \right]  \label{12}
\end{equation}
Equating the entries in position $(1,1)$ in (\ref{12}), we get
that $0=1$, a contradiction.  Hence, $\phi$ maps every type 1 idempotent of the form $\left[ \begin{array}{cc} 1 & m \\
0 & 0 \end{array} \right]$ to a type 2 idempotent.

(b)  By (a), there are maps $\alpha_2:  M \ra M'$ and $v_2: M \ra
N'$ such that
$$\phi \left(\left[ \begin{array}{cc} 1 & m \\
0 &0 \end{array} \right] \right)= \left[ \begin{array}{cc} 0 &
\alpha_2(m)\\ v_2 (m) & 1 \end{array}\right] $$ for all $m\in M$.
Let $m$ and $m_0$ be arbitrary elements of $M$. Applying $\phi$ to
$\left[ \begin{array}{cc} 1 & m_0\\
0 & 0 \end{array} \right] \ \left[ \begin{array}{cc} 1 & m \\
0 & 0 \end{array} \right] = \left[ \begin{array}{cc} 1 &m \\
0 &0 \end{array} \right]$, we obtain that
$$\left[ \begin{array}{cc} 0 & \alpha_2(m_0) \\
v_2 (m_0) & 1 \end{array} \right] \ \left[ \begin{array}{cc} 0 & \alpha_2 (m)\\
v_2 (m) &1 \end{array} \right] = \left[ \begin{array}{cc} 0 &\alpha_2(m)\\
v_2(m) &1 \end{array}\right].$$ Position $(1,2)$ shows that
$\alpha_2(m) = \alpha_2(m_0)$. Consequently, $\alpha_2$ is a
constant function, say $\alpha_2(m) = a_2'$ for some fixed $a_2'
\in M'$, which concludes the proof.
\end{proof}

\begin{corollary}\label{corollary2}
If there is an $m \in M$ such that $\phi$ maps the type 1 idempotent $\left[ \begin{array}{cc}1 & m \\
0 &0 \end{array}\right]$ to a type~1 idempotent, then \vskip
-0.5cm
(a) $\phi$ maps every type 1 idempotent of the form $\left[ \begin{array}{cc} 1 & m\\
0 &0 \end{array} \right]$ to a type 1 idempotent; and \vskip
-0.5cm (b) there is a fixed element $b'_1 \in N'$ and a map $u_1:
M \ra M'$ such that for all $m \in M$,
$$\phi \left(\left[ \begin{array}{cc} 1& m \\
0 &0 \end{array}\right]\right) = \left[ \begin{array}{cc} 1 & u_1(m) \\
b'_1 & 0 \end{array} \right].$$
\end{corollary}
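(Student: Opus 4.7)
The plan is to follow the proof of Lemma~\ref{lemma1} almost verbatim, swapping the roles of type~1 and type~2 idempotents throughout, since the hypothesis and the conclusion here are the ``opposite-type'' analogues of those in Lemma~\ref{lemma1}. Throughout one uses that in this section all four Morita maps $[\,,\,], (\,,\,)$ and $[\,,\,]', (\,,\,)'$ are identically zero, which collapses the multiplication in $T$ and $T'$ to purely ``matrix-like'' products.

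For part~(a), I would write $\phi \left(\left[ \begin{array}{cc} 1 & m_1 \\ 0 & 0 \end{array}\right]\right) = \left[ \begin{array}{cc} 1 & m_1' \\ n_1' & 0 \end{array}\right]$ and argue by contradiction: suppose that for some $m\in M$, $\phi \left(\left[ \begin{array}{cc} 1 & m \\ 0 & 0 \end{array}\right]\right) = \left[ \begin{array}{cc} 0 & m' \\ n' & 1 \end{array}\right]$ is a type~2 idempotent. Applying $\phi$ to the identity $\left[ \begin{array}{cc} 1 & m_1 \\ 0 & 0 \end{array}\right]\left[ \begin{array}{cc} 1 & m \\ 0 & 0 \end{array}\right] = \left[ \begin{array}{cc} 1 & m \\ 0 & 0 \end{array}\right]$ in $T$, and using that every Morita map on the $T'$ side vanishes, the $(2,2)$-entry of the product on the left computes to $(n_1',m')+0\cdot 1 = 0$, while the right-hand side has $1$ in position $(2,2)$. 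This contradiction establishes (a).

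For part~(b), (a) supplies functions $\alpha_1:M\to M'$ and $\beta_1:M\to N'$ such that $\phi \left(\left[ \begin{array}{cc} 1 & m \\ 0 & 0 \end{array}\right]\right) = \left[ \begin{array}{cc} 1 & \alpha_1(m) \\ \beta_1(m) & 0 \end{array}\right]$ for every $m\in M$. Applying $\phi$ to $\left[ \begin{array}{cc} 1 & m_0 \\ 0 & 0 \end{array}\right]\left[ \begin{array}{cc} 1 & m \\ 0 & 0 \end{array}\right] = \left[ \begin{array}{cc} 1 & m \\ 0 & 0 \end{array}\right]$ and reading off the $(2,1)$-entry (with all Morita maps zero) yields $\beta_1(m_0) = \beta_1(m)$ for all $m_0, m \in M$. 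Thus $\beta_1$ is a constant function; calling its value $b_1' \in N'$ and setting $u_1 := \alpha_1$ gives the desired formula.

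The only delicate point is choosing the correct entry in each matrix product to extract the required equation: the $(2,2)$-entry is what exposes the $0=1$ contradiction in (a) (the $(1,1)$-entry yields only $0=0$ once the Morita maps are killed), and the $(2,1)$-entry is what forces the constancy of $\beta_1$ in~(b). There is no genuine obstacle; everything is routine bookkeeping with the zero-Morita-map multiplication rule, mirrored from Lemma~\ref{lemma1}.
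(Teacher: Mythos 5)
Your proof is correct and takes essentially the same route as the paper: part (b) is exactly the paper's argument (mirror the proof of Lemma~\ref{lemma1}(b), reading off position $(2,1)$ instead of $(1,2)$), and your direct $(2,2)$-entry computation for part (a) is a valid mirror of the proof of Lemma~\ref{lemma1}(a). The only (cosmetic) difference is that the paper obtains (a) in one line as an immediate consequence of Lemma~\ref{lemma1}(a) itself --- if some idempotent $\left[\begin{array}{cc} 1 & m\\ 0 & 0\end{array}\right]$ were sent to a type~2 idempotent, that lemma would force \emph{all} of them to be, contradicting the hypothesis --- whereas you redo the computation from scratch.
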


\begin{proof} (a) This is an immediate consequence of Lemma \ref{lemma1}(a).

\vskip -0.3truecm \qquad (b) By (a), there are maps $u_1: M\ra M'$
and $\beta_1: M \ra N'$ such that
$$\phi \left(\left[ \begin{array}{cc} 1 & m\\
0 &0 \end{array} \right]\right) = \left[ \begin{array}{cc} 1 & u_1(m) \\
\beta_1(m) & 0 \end{array} \right]$$ for any $m\in M$. Proceeding
as in the proof of Lemma \ref{lemma1}(b), but using position
$(2,1)$ instead of position $(1,2)$, the desired result follows.
\end{proof}

\qquad The foregoing results are summarized in:
\begin{proposition} \label{proposition3}
Either there is a map $u_1: \ M \ra M'$ and a  fixed element $b'_1
\in N'$ such that
$\phi$ maps every type 1 idempotent of the form $\left[ \begin{array}{cc} 1 & m \\
0 &0 \end{array}\right]$ to a type 1 idempotent of the form $\left[ \begin{array}{cc} 1 & u_1(m) \\
b'_1 &0 \end{array} \right]$, or there is a map $v_2:  M \ra N'$
and a fixed element
$a'_2 \in M'$ such that $\phi$ maps every type 1 idempotent of the form $\left[ \begin{array}{cc} 1 & m \\
0 & 0 \end{array}\right]$ to a type 2 idempotent of the form $\left[ \begin{array}{cc} 0 & a'_2 \\
v_2 (m) & 1 \end{array}\right]$.
\end{proposition}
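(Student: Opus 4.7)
The proof will be a direct combination of Lemma \ref{lemma1} and Corollary \ref{corollary2}, and the only real content is to verify the dichotomy that triggers one of those two results.

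First I would fix any particular element $m_1\in M$ (for instance $m_1=0$) and consider the image $\phi\!\left(\!\left[ \begin{array}{cc} 1 & m_1\\ 0 & 0 \end{array} \right]\!\right)$. Since $\phi$ is a ring isomorphism, this image is an idempotent of $T'$. It cannot equal the zero idempotent, for otherwise by injectivity we would have $\left[ \begin{array}{cc} 1 & m_1\\ 0 & 0 \end{array} \right]=0$, which is false. It also cannot equal the identity idempotent, since otherwise $\left[ \begin{array}{cc} 1 & m_1\\ 0 & 0 \end{array} \right]$ would equal $\left[ \begin{array}{cc} 1 & 0\\ 0 & 1 \end{array} \right]$, again false. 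Thus the image is a nontrivial idempotent of $T'$, and by the paragraph preceding Lemma \ref{lemma1} it has one of the two forms listed in (\ref{8}); that is, it is either a type 1 or a type 2 idempotent of $T'$.

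I would then split into the two resulting cases. In the case where $\phi\!\left(\!\left[ \begin{array}{cc} 1 & m_1\\ 0 & 0 \end{array} \right]\!\right)$ is of type 2, the hypothesis of Lemma \ref{lemma1} is fulfilled, and part (b) of that lemma yields a fixed element $a'_2\in M'$ and a map $v_2:M\to N'$ with the required description, giving the second alternative of the proposition. In the case where $\phi\!\left(\!\left[ \begin{array}{cc} 1 & m_1\\ 0 & 0 \end{array} \right]\!\right)$ is of type 1, the hypothesis of Corollary \ref{corollary2} is fulfilled, and part (b) of that corollary produces a fixed element $b'_1\in N'$ and a map $u_1:M\to M'$ giving the first alternative. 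I do not expect any genuine obstacle here, since the two lemmas have already done the substantive work; the only point to be careful about is that I must invoke the dichotomy provided by part (a) of Lemma \ref{lemma1} (respectively Corollary \ref{corollary2}) to conclude that the behavior observed for the single element $m_1$ is in fact forced on every $m\in M$, so that the conclusion applies uniformly.
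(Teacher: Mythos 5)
Your proposal is correct and matches the paper's intent exactly: the paper offers no separate argument, introducing the proposition with the phrase that it merely summarizes Lemma \ref{lemma1} and Corollary \ref{corollary2}, which is precisely the case split you carry out. Your added verification that the image idempotent is nontrivial (hence of one of the two forms in (\ref{8})) is the only substantive detail, and it is handled correctly via injectivity of $\phi$.
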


\qquad Similar arguments yield the following result for the action
of $\phi$ on type 1
idempotents of the form $\left[ \begin{array}{cc} 1 & 0 \\
n & 0 \end{array}\right].$

\begin{proposition} \label{proposition4}
For all $n \in N$, either $\phi \left(\left[ \begin{array}{cc} 1& 0 \\
n &0 \end{array}\right]\right) = \left[ \begin{array}{cc} 1 & a'_1 \\
v_1 (n) & 0 \end{array}\right]$ for some fixed $a'_1 \in M'$ and
some map $v_1: N\ra N'$,
or $\phi \left(\left[ \begin{array}{cc} 1 & 0 \\
n & 0 \end{array}\right]\right) = \left[ \begin{array}{cc} 0 & u_2(n) \\
b'_2 & 1 \end{array}\right]$ for some fixed $b'_2 \in N'$ and some
map $u_2 : N \ra M'.$
\end{proposition}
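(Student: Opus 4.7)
The argument will mirror that of Lemma~\ref{lemma1}, Corollary~\ref{corollary2} and Proposition~\ref{proposition3}; only the adaptations need to be described. Set $E_n := \left[\begin{array}{cc} 1 & 0 \\ n & 0 \end{array}\right]$. Because all Morita maps in $T$ vanish, $E_n$ is an idempotent of $T$ for every $n\in N$, so $\phi(E_n)$ is a non-trivial idempotent of $T'$ and therefore has one of the two forms listed in~(\ref{8}).

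The first step is to establish the analogues of Lemma~\ref{lemma1}(a) and Corollary~\ref{corollary2}(a): the type (1 or 2) of $\phi(E_n)$ is the same for all $n\in N$. The input is the identity
\[
E_n\cdot E_{n_1}=E_n,
\]
which holds in $T$ precisely because the maps $[,]$ and $(,)$ are zero. Suppose for contradiction that $\phi(E_{n_1})$ is of type~2 while $\phi(E_n)$ is of type~1 (or conversely). Applying $\phi$ to the displayed equation and expanding the product on the left, the vanishing of the Morita maps in $T'$ forces a $0$ into the $(1,1)$-entry (respectively the $(2,2)$-entry) of the product, whereas the corresponding entry on the right is $1$; this is the familiar $0=1$ contradiction of Lemma~\ref{lemma1}(a).

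With the type of $\phi(E_n)$ fixed, I would parameterise it and then force one of the entries to be constant. In the type~1 case, write $\phi(E_n)=\left[\begin{array}{cc} 1 & \alpha_1(n) \\ v_1(n) & 0 \end{array}\right]$ for maps $\alpha_1:N\to M'$, $v_1:N\to N'$; in the type~2 case, write $\phi(E_n)=\left[\begin{array}{cc} 0 & u_2(n) \\ \beta_2(n) & 1 \end{array}\right]$ for maps $u_2:N\to M'$, $\beta_2:N\to N'$. Applying $\phi$ to $E_n\cdot E_{n_0}=E_n$ for arbitrary $n$ and fixed $n_0$ and expanding (using again that the Morita maps on the $T'$ side are zero), one reads the $(1,2)$-entry in the type~1 case to obtain $\alpha_1(n)=\alpha_1(n_0)$, so $\alpha_1\equiv a_1'\in M'$; and one reads the $(2,1)$-entry in the type~2 case to obtain $\beta_2(n)=\beta_2(n_0)$, so $\beta_2\equiv b_2'\in N'$. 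These are precisely the two alternatives in the statement.

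There is no genuine obstacle here: the whole argument is the mirror of the one already carried out for idempotents of the form $\left[\begin{array}{cc} 1 & m \\ 0 & 0 \end{array}\right]$. The only care required is bookkeeping — since the free entry of $E_n$ lives in position $(2,1)$ rather than $(1,2)$, one must pick the correct off-diagonal slot when extracting constancy (position $(1,2)$ in the type~1 case and position $(2,1)$ in the type~2 case); otherwise the computation produces a tautology and no information about $\alpha_1$ or $\beta_2$.
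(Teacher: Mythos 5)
Your proposal is correct and is exactly the argument the paper intends when it says ``similar arguments yield the following result'': you mirror the proof of Lemma~\ref{lemma1}/Corollary~\ref{corollary2} using the identity $E_n E_{n_1}=E_n$, derive the $0=1$ contradiction from the $(1,1)$- or $(2,2)$-entry to fix the type, and then extract constancy of the correct off-diagonal entry (the $(1,2)$-entry in the type~1 case, the $(2,1)$-entry in the type~2 case). The only cosmetic quibble is that $E_nE_{n_1}=E_n$ holds for any Morita context, since $[0,n_1]=0$ and $(n,0)=0$ by bilinearity, not ``precisely because'' the Morita maps vanish.
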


\qquad Setting $m = 0$ in $\left[ \begin{array}{cc} 1 & m \\
0 & 0 \end{array} \right]$ in Proposition \ref{proposition3} and setting $n =0$ in $\left[ \begin{array}{cc} 1 &0  \\
n &0 \end{array} \right]$ in Proposition~\ref{proposition4}, we
obtain in both cases
the type 1 idempotent $\left[ \begin{array}{cc} 1 &0 \\
0 & 0 \end{array} \right]$, and so if we consider
positions~$(1,1)$ and~$(2,2)$ of the
possible actions of $\phi$ on $\left[ \begin{array}{cc} 1 & 0 \\
0 & 0 \end{array} \right]$ as described in these two propositions,
we arrive at:

\begin{corollary} \label{corollary5}
For all $m \in M$ and all $n \in N$, either \vskip -0.3cm
$(I') \qquad \phi \left(\left[ \begin{array}{cc} 1 & m \\
0 &0 \end{array}\right]\right) = \left[ \begin{array}{cc} 1 & u_1(m) \\
v_1 (0) & 0 \end{array}\right] \quad \mbox{and} \quad \phi \left(\left[ \begin{array}{cc} 1 & 0 \\
n& 0 \end{array} \right]\right) = \left[ \begin{array}{cc} 1 & u_1(0) \\
v_1(n) & 0 \end{array}\right]$ \vskip -0.2cm \hspace*{1.2cm} for
some maps $u_1: M \ra M'$ and $v_1 : N \ra N'$, or \vskip -0.2cm
$(I'') \qquad \phi \left(\left[ \begin{array}{cc} 1 & m \\
0 &0 \end{array}\right]\right) = \left[ \begin{array}{cc} 0 & u_2(0) \\
v_2 (m) & 1 \end{array}\right] \quad \mbox{and} \quad \phi \left(\left[ \begin{array}{cc} 1& 0 \\
n & 0 \end{array}\right]\right)  = \left[ \begin{array}{cc} 0 & u_2(n) \\
v_2(0) & 1 \end{array}\right]$ \vskip -0.2cm \hspace*{1.2cm} for
some maps $u_2: N \ra M'$ and $v_2: M \ra N'$.
\end{corollary}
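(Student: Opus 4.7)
The plan is to extract the statement by simply specializing Propositions \ref{proposition3} and \ref{proposition4}
at the zero element and matching the two resulting descriptions of $\phi\left(\left[\begin{array}{cc} 1 & 0 \\ 0 & 0 \end{array}\right]\right)$.

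First I would apply Proposition \ref{proposition3} with $m=0$. This gives two alternatives for
$\phi\left(\left[\begin{array}{cc} 1 & 0 \\ 0 & 0 \end{array}\right]\right)$: either it equals the type~1 idempotent
$\left[\begin{array}{cc} 1 & u_1(0) \\ b_1' & 0 \end{array}\right]$, or it equals the type~2 idempotent
$\left[\begin{array}{cc} 0 & a_2' \\ v_2(0) & 1 \end{array}\right]$. Independently, Proposition \ref{proposition4} with
$n=0$ yields that the same image equals either the type~1 idempotent
$\left[\begin{array}{cc} 1 & a_1' \\ v_1(0) & 0 \end{array}\right]$ or the type~2 idempotent
$\left[\begin{array}{cc} 0 & u_2(0) \\ b_2' & 1 \end{array}\right]$. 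Since the entry in position $(1,1)$ of the image
is either $1$ or $0$, and this entry decides the type, the alternative chosen in Proposition \ref{proposition3} must be the
same alternative chosen in Proposition \ref{proposition4}; in other words, one cannot mix a type~1 output with a type~2
output for the single matrix $\left[\begin{array}{cc} 1 & 0 \\ 0 & 0 \end{array}\right]$.

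In the common type~1 case, comparing the two expressions for $\phi\left(\left[\begin{array}{cc} 1 & 0 \\ 0 & 0 \end{array}\right]\right)$
forces $a_1' = u_1(0)$ and $b_1' = v_1(0)$, so substituting back into the conclusions of Propositions \ref{proposition3} and
\ref{proposition4} gives exactly case $(I')$. In the common type~2 case, the comparison yields $a_2' = u_2(0)$ and
$b_2' = v_2(0)$, and substitution recovers case $(I'')$. This handles both possibilities and concludes the argument.

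There is no real obstacle here; the content of the corollary is a bookkeeping consistency check between the two previous
propositions. The only point that requires (minor) care is observing that the position $(1,1)$ entry rules out the
mixed alternatives, so that a single idempotent image cannot be simultaneously of type~1 and type~2. Everything else
reduces to relabelling the ``constant'' values $a_i', b_i'$ in terms of $u_i(0)$ and $v_i(0)$.
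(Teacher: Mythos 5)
Your proposal is correct and follows essentially the same route as the paper: the authors likewise set $m=0$ in Proposition \ref{proposition3} and $n=0$ in Proposition \ref{proposition4}, observe that both then describe the single idempotent $\phi\left(\left[\begin{array}{cc} 1 & 0 \\ 0 & 0 \end{array}\right]\right)$, and use positions $(1,1)$ and $(2,2)$ to rule out mixing the two alternatives and to identify the constants $a_i', b_i'$ with $u_i(0), v_i(0)$. Nothing further is needed.
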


\qquad We set
$$
m'_0: = u_1(0) \quad \mbox{and} \quad n'_0 : = v_1
(0),\label{(13)}
$$
where $u_1$ and $v_1$ are as in Corollary \ref{corollary5}.

\vskip -0.3truecm\qquad Turning our attention to type 2 idempotents of the form $\left[ \begin{array}{cc} 0 & n \\
n & 1\end{array} \right]$ and $\left[ \begin{array}{cc} 0 &0 \\
n & 1 \end{array}\right]$, it can be verified as above that
\begin{corollary} \label{corollary6}
For all $m \in M$ and all $n \in N$, either \vskip -0.3cm
$(II') \qquad \phi \left(\left[ \begin{array}{ll} 0 & m \\
0 & 1 \end{array}\right]\right) = \left[ \begin{array}{cc} 0 & h_1(m) \\
p_1(0) & 1 \end{array}\right]$ \quad and \quad $\phi \left(\left[ \begin{array}{ll} 0 & 0 \\
n & 1 \end{array}\right]\right) = \left[ \begin{array}{cc} 0 & h_1(0)\\
p_1(n) & 1 \end{array}\right]$ \vskip -0.2cm \hspace*{1.5cm} for
some maps $h_1: M \ra M'$ and $p_1: N \ra N'$, or \vskip -0.2cm
$(II'') \qquad \phi \left(\left[ \begin{array}{ll}0 & m \\
0 &1 \end{array} \right]\right) = \left[ \begin{array}{cc} 1 & h_2(0) \\
p_2(m) & 0 \end{array}\right]$ \quad and \quad $\phi \left(\left[ \begin{array}{ll} 0 & 0 \\
n & 1 \end{array}\right]\right) = \left[ \begin{array}{cc} 1 & h_2(n) \\
p_2(0) & 0 \end{array}\right]$ \vskip -0.2cm \hspace*{1.5cm} for
some maps $h_2: N \ra M'$ and $p_2: M \ra N'$.
\end{corollary}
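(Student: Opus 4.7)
The plan is to reproduce the strategy used for type~1 idempotents in Lemma \ref{lemma1}, Corollary \ref{corollary2} and Corollary \ref{corollary5}, adapted to the two shapes of type~2 idempotents $E_m=\left[\begin{array}{cc}0&m\\0&1\end{array}\right]$ and $F_n=\left[\begin{array}{cc}0&0\\n&1\end{array}\right]$. A direct computation (using that both Morita maps vanish on the contexts) gives the absorption identities $E_{m_0}E_m=E_{m_0}$ and $F_{n_0}F_n=F_n$ for all $m,m_0\in M$ and $n,n_0\in N$. These are the multiplicative relations that $\phi$ will convert into useful constraints inside $T'$.

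First I would establish uniform typing for each shape separately. Suppose, for the shape $E_m$, that $\phi(E_{m_1})$ is of type~2 for some $m_1$ while $\phi(E_m)$ is of type~1 for some other $m$. Applying $\phi$ to $E_{m_1}E_m=E_{m_1}$ and reading position $(2,2)$ produces a term of the form $(n'_1,m')$ which is forced to equal $1$, contradicting $(,)\equiv 0$ on~$T'$. The reverse mixing is ruled out by reading position $(1,1)$, where a term $[m'_1,n']$ would have to equal $1$. A symmetric argument settles the shape $F_n$ using the relation $F_{n_1}F_n=F_n$: mixing types forces either $[,]$ or $(,)$ on $T'$ to produce $1$, a contradiction.

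Next I would pin down the variable and constant positions. Applying $\phi$ to $E_{m_0}E_m=E_{m_0}$ in the \emph{all type~1} case, reading position $(1,2)$ shows that the $M'$-entry of $\phi(E_m)$ is independent of $m$; call it $h_2(0)$, and let the $N'$-entry be $p_2(m)$. In the \emph{all type~2} case, position $(2,1)$ forces the $N'$-entry to be constant, say $p_1(0)$, and the $M'$-entry becomes a map $h_1(m)$. The dual calculation for $F_n$ via $F_{n_0}F_n=F_n$ yields either $\phi(F_n)=\left[\begin{array}{cc}1&h_2(n)\\p_2(0)&0\end{array}\right]$ or $\phi(F_n)=\left[\begin{array}{cc}0&h_1(0)\\p_1(n)&1\end{array}\right]$, depending on which type is selected.

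Finally I would glue the two shape analyses. Since $E_0=F_0=\left[\begin{array}{cc}0&0\\0&1\end{array}\right]$, the image $\phi\left(\left[\begin{array}{cc}0&0\\0&1\end{array}\right]\right)$ is an idempotent of a single type, and so the dichotomy for $E_m$ and the dichotomy for $F_n$ must pick matching alternatives. This produces exactly the cases (II$'$) and (II$''$), with the common constants being the values of $h_1,p_1$ or $h_2,p_2$ at the base point. The only real obstacle is keeping the bookkeeping straight, since the absorption pattern for $E_m$ is on the left while that for $F_n$ is on the right---opposite to the type~1 case---so the positions producing contradictions and the positions producing constancy are swapped accordingly.
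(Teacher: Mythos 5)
Your proposal is correct and is exactly the verification the paper intends: for Corollary \ref{corollary6} the authors simply say ``it can be verified as above,'' meaning the type-1 arguments of Lemma \ref{lemma1}, Corollary \ref{corollary2} and Corollary \ref{corollary5} transported to the type-2 idempotents, which is what you carry out (including the correct observation that the absorption identities, and hence the positions yielding the contradiction and the constancy, are swapped relative to the type-1 case). The computations and the gluing via the common idempotent $E_0=F_0$ all check out.
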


\begin{theorem} \label{lasttheorem}
Let $R,S,R'$ and $S'$ be rings having only trivial idempotents,
$M$ an $R$-$S$-bimodule, $N$ an $S$-$R$-bimodule, $M'$ an
$R'$-$S'$-bimodule and
$N'$ an $S'$-$R'$-bimodule. Let $T=\left[ \begin{array}{cc} R &M\\
 N& S \end{array} \right]$ and $T'=\left[ \begin{array}{cc} R' &M'\\
 N'& S' \end{array} \right]$ be the associated Morita context rings, where the contexts are
 considered with both Morita maps equal to zero.
 Then ${\rm Iso}_0(T,T')={\rm Iso}(T,T')$.
 \end{theorem}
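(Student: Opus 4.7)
The plan is to show that every $\phi\in\mbox{Iso}(T,T')$ lies in $\mbox{Iso}_0(T,T')$. Because $R$ and $S$ have only trivial idempotents, the idempotents of $T$ are precisely those listed in~(\ref{8}), so Corollaries~\ref{corollary5} and~\ref{corollary6} apply and produce four combinations for the action of $\phi$ on the distinguished idempotents of $T$: $(I',II')$, $(I',II'')$, $(I'',II')$ and $(I'',II'')$. I would eliminate the two mixed combinations and match the remaining ones with $\mbox{Iso}_0^0(T,T')$ and $\mbox{Iso}_0^1(T,T')$ respectively.

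First, I would eliminate the mixed combinations by applying $\phi$ to $\left[\begin{array}{cc} 1 & 0 \\ 0 & 0 \end{array}\right] + \left[\begin{array}{cc} 0 & 0 \\ 0 & 1 \end{array}\right] = I_T$. Setting $m=0$ and $n=0$ in $(I')$--$(II'')$ pins down the shape of $\phi\left(\left[\begin{array}{cc} 1 & 0 \\ 0 & 0 \end{array}\right]\right)$ and $\phi\left(\left[\begin{array}{cc} 0 & 0 \\ 0 & 1 \end{array}\right]\right)$ in each case. In combination $(I',II'')$ both of these images have $0$ as their $(2,2)$-entry, so the $(2,2)$-entry of their sum is $0_{S'}\neq 1_{S'}$, contradicting $\phi(I_T)=I_{T'}$; combination $(I'',II')$ is excluded by the same argument applied to the $(1,1)$-entry.

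Next, I would treat case $(I',II')$. Writing $m'_0:=u_1(0)$ and $n'_0:=v_1(0)$, the formulas in $(I')$ together with additivity of $\phi$ give
\[
\phi\left(\left[\begin{array}{cc} 0 & m \\ 0 & 0 \end{array}\right]\right)=\phi\left(\left[\begin{array}{cc} 1 & m \\ 0 & 0 \end{array}\right]\right)-\phi\left(\left[\begin{array}{cc} 1 & 0 \\ 0 & 0 \end{array}\right]\right)=\left[\begin{array}{cc} 0 & u_1(m)-m'_0 \\ 0 & 0 \end{array}\right]\in T'_1
\]
for every $m\in M$, and a symmetric subtraction yields $\phi\left(\left[\begin{array}{cc} 0 & 0 \\ n & 0 \end{array}\right]\right)\in T'_{-1}$ for every $n\in N$. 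Hence $\phi$ is a semigraded isomorphism in the sense of Definition~\ref{defsemigraded}. If at least one of $M'$ and $N'$ is nonzero, Theorem~\ref{semigraded}(1) yields $\phi\in\mbox{Iso}_0^0(T,T')$, the indecomposability hypothesis on $R'$ and $S'$ being automatic. Otherwise $M'=N'=0$, so $T'\simeq R'\times S'$ has the nontrivial central idempotent corresponding to $(1,0)$ by Lemma~\ref{centralidempotents}; since $T\simeq T'$, Corollary~\ref{corindecomposable} forces $M=N=0$ as well, and the conclusion follows from Remark~\ref{remsemigraded}. Case $(I'',II'')$ is entirely analogous: the mirror subtraction shows that $\phi$ is anti-semigraded, after which Theorem~\ref{semigraded}(2) or Remark~\ref{remsemigraded} gives $\phi\in\mbox{Iso}_0^1(T,T')$.

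The main obstacle has already been absorbed into the preceding Lemma~\ref{lemma1}, Corollary~\ref{corollary2}, Propositions~\ref{proposition3}--\ref{proposition4} and Corollaries~\ref{corollary5}--\ref{corollary6}: the delicate step was showing that on the distinguished idempotents $\phi$ must conform to one of four rigid patterns. What remains for this theorem is only the short bookkeeping above to exclude the two incompatible patterns, followed by the additive subtraction upgrading the constrained action on diagonal idempotents to the required (anti-)semigraded structure on all of $T$, after which the machinery of Section~\ref{sectiongraded} finishes the job.
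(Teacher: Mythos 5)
Your proposal is correct and follows essentially the same route as the paper: eliminate the mixed patterns $(I',II'')$ and $(I'',II')$ via the diagonal idempotent sum, use additive subtraction to show $\phi$ is semigraded in case $(I',II')$ (resp.\ anti-semigraded in case $(I'',II'')$), and then invoke Theorem~\ref{semigraded} together with Remark~\ref{remsemigraded} for the degenerate case where all bimodules vanish. The only cosmetic differences are that you subtract $\phi\left(\left[\begin{array}{cc}1&0\\0&0\end{array}\right]\right)$ directly rather than using the paper's three-term combination, and you handle case $(I'',II'')$ directly instead of composing with $\alpha_T$; both variants are equally valid.
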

 \begin{proof} The result is clear if
 $\mbox{Iso}(T,T')=\emptyset$. Assume that $\mbox{Iso}(T,T')$ is
 non-empty, i.e.~$T$ and $T'$ are isomorphic. If both $M$ and $N$
 are zero, then $T\simeq R\times S$ is not indecomposable. This
 forces $M'$ and $N'$ to be zero, otherwise $T'$ would be
 indecomposable by Corollary \ref{corindecomposable}. Now
 $\mbox{Iso}_0(T,T')=\mbox{Iso}(T,T')$ by Remark
 \ref{remsemigraded}.

\vskip-0.4truecm\qquad Assume now
 that at least one of $M$ and $N$ is non-zero. Then as above we can not have both
 $M'$ and $N'$ equal to zero.  Let $\phi:T\ra T'$ be a ring
 isomorphism. Since $\left[ \begin{array}{ll} 1 & 0 \\
0  & 0 \end{array} \right] + \left[ \begin{array}{ll} 0 & 0 \\
0 & 1\end{array} \right] = \left[ \begin{array}{ll} 1 & 0 \\
0 & 1\end{array} \right]$, it follows from Corollaries
\ref{corollary5}-\ref{corollary6}, by considering
position~$(2,2)$, that $(I')$ and $(II'')$ cannot simultaneously
hold; neither can $(I'')$ and $(II')$ simultaneously hold.
Therefore, either $(I')$ and $(II')$ are true, or $(I'')$ and
$(II'')$ are true.

\vskip -0.4truecm\qquad We first consider

\vskip -0.3truecm

\ul{$(I')$ and $(II')$}: For every $m \in M$,
\begin{eqnarray*}
\phi \left(\left[ \begin{array}{ll} 0 & m \\
0 & 0 \end{array}\right]\right) &= & \phi \left(\left[ \begin{array}{ll} 1 & m \\
0 & 0 \end{array}\right]\right) + \phi \left(\left[ \begin{array}{ll} 0 & 0 \\
0 &1 \end{array}\right]\right) - \phi \left(\left[ \begin{array}{ll} 1  & 0 \\
0 &1 \end{array}\right]\right)\\
\\
&= & \left[ \begin{array}{cc} 1 & u_1(m) \\
v_1(0) & 0 \end{array} \right] + \left[ \begin{array}{cc} 0 & h_1(0)\\
p_1(0) & 1 \end{array}\right] - \left[ \begin{array}{ll} 1   & 0 \\
0 & 1\end{array} \right]\\
\\
& = & \left[ \begin{array}{cc} 0 & u_1(m) + h_1(0)\\
v_1(0) + p_1(0) & 0 \end{array}\right],
\end{eqnarray*}
and setting $m=0$, we conclude that
$$h_1(0) = -m'_0.$$
Hence, defining $u: M \ra M'$ by $u(m) = u_1(m) - m'_0$, it
follows that
\begin{equation}
\phi \left(\left[ \begin{array}{cc} 0 & m \\
0 & 0 \end{array}\right]\right) = \left[ \begin{array}{cc} 0 & u(m) \\
0  & 0 \end{array}\right] \label{16}
\end{equation}
for all $m \in M$.

\vskip -0.3cm\qquad Similarly, considering the equality
$$\phi \left(\left[ \begin{array}{cc} 0 & 0 \\
n  & 0 \end{array} \right]\right) = \phi \left(\left[ \begin{array}{cc} 1 &0 \\
n & 0 \end{array}\right]\right) + \phi \left(\left[ \begin{array}{cc} 0 &0 \\
0 & 1 \end{array} \right]\right) - \phi \left(\left[ \begin{array}{cc} 1 & 0 \\
0 &1 \end{array}\right]\right)$$ and defining $v: N \ra N'$ by
$v(n) = v_1(n) - v_1(0)$, it follows as above that
\begin{equation}
\phi \left[ \begin{array}{cc} 0 & 0 \\
n  & 0 \end{array} \right] = \left[ \begin{array}{cc}0  & 0 \\
v(n) & 0 \end{array} \right], \label{18}
\end{equation}
for all all $n \in N$.

\vskip -0.5truecm \qquad Equations (\ref{16}) and (\ref{18}) show
that $\phi(T_1)\subseteq T_1$ and $\phi(T_{-1})\subseteq T_{-1}$.
Thus $\phi:T\ra T'$ is a semigraded isomorphism, hence $\phi \in
\mbox{Iso}_0^0(T,T')$ by Theorem \ref{semigraded}.

\vskip -0.3truecm \qquad The other possible situation is \vskip
-0.3truecm

\ul{$(I'')$ and $(II'')$}: Then $\psi=\phi \alpha_T^{-1}:T^{-1}\ra
T'$ is a ring isomorphism between the Morita context rings
$T^{-1}$ and $T'$, and it is easy to see that $\psi$ acts on the
particular idempotents (of $T^{-1}$) that we consider according to
the case $(I')$ and $(II')$. Then applying the result in the first
part of the proof, we find that $\psi
\in\mbox{Iso}_0^0(T^{-1},T')$. Now using (\ref{linkiso0iso1}) we
get that $\phi=\psi \alpha_T\in \mbox{Iso}_0^1(T,T')$.
\end{proof}

\begin{remark} \label{finalremark}
(1) We first note that any ring $A$ which has a non-trivial
idempotent $e$ is isomorphic to a Morita context ring. To see
this, we consider the Peirce decomposition associated with the
complete system of orthogonal idempotents $\{ e,1-e\}$; more
precisely,
$$A=eAe\oplus eA(1-e)\oplus (1-e)Ae\oplus (1-e)A(1-e)$$
as additive groups. Moreover, $eAe$ is a ring with identity
element $e$, $(1-e)A(1-e)$ is a ring with identity element $1-e$,
$eA(1-e)$ is a left $eAe$, right $(1-e)A(1-e)$-bimodule with
actions defined by the multiplication of the ring $A$, and
similarly $(1-e)Ae$ is a left $(1-e)A(1-e)$, right $eAe$-bimodule.
We have a Morita context $(eAe, (1-e)A(1-e), eA(1-e), (1-e)Ae,
f,g)$, where $f$ and $g$ are induced by the multiplication of $A$.
It is easy to see that the above decomposition as additive groups
induces in fact an isomorphism of rings
$$A\simeq \left[ \begin{array}{cc} eAe & eA(1-e)\\
 (1-e)Ae & (1-e)a(1-e) \end{array} \right].$$

\vskip -0.3truecm \qquad A ring $T=\left[ \begin{array}{cc} R & M\\
 N & S \end{array} \right]$ associated with a general Morita
 context is an instance of a ring with non-trivial idempotents,
 since $e:=\left[ \begin{array}{cc} 1 & 0 \\
0  & 0 \end{array} \right]$ is such an idempotent. It is clear
that $$eTe=\left[ \begin{array}{cc} R & 0 \\
0  & 0 \end{array} \right]\simeq R,\; (1-e)T(1-e)=\left[ \begin{array}{cc} 0 & 0 \\
0  & S \end{array} \right]\simeq S,$$
$$eT(1-e)=\left[ \begin{array}{cc} 0 & M \\
0  & 0 \end{array} \right]\simeq M,\; (1-e)Te=\left[ \begin{array}{cc} 0 & 0 \\
N  & 0 \end{array} \right]\simeq N.$$

\vskip -0.3truecm \qquad We conclude that a ring has non-trivial
idempotents if and only if
it is isomorphic to a Morita context ring.\\

(2) In [AW] a ring $R$ is called strongly indecomposable if it is
not isomorphic to a ring of the form $\left[ \begin{array}{cc} A & X\\
0 & B \end{array} \right]$, where $A$ and $B$ are rings, and $X$
is a nonzero left $A$, right $B$-bimodule.
The automorphism group of a generalized triangular matrix ring $\left[ \begin{array}{cc} R & _RM_S\\
0 & S \end{array} \right]$ over strongly indecomposable rings $R$
and $S$ is computed in [AW].

\vskip -0.3truecm \qquad As we explained in the first part of the
remark, a ring does not have non-trivial idempotents if and only
if it is not isomorphic to a Morita context ring, so this is a
4-corner version of the "strongly indecomposable ring" concept of
[AW]. Thus our Theorem \ref{lasttheorem} can be seen as a result
similar to Theorem 3.2 of [AW] for 4-corners generalized matrix
rings.
\end{remark}

\vskip -0.5cm

\section*{References}
\begin{enumerate}
\item [{[AW]}] P.N. \'Anh and L. van Wyk, {\it Automorphism groups
of generalized triangular matrix rings}, manuscript.
\item[{[BHKP]}] G.F. Birkenmeier, H.E. Heatherly, J.Y. Kim and
J.K. Park, {\it Triangular matrix representations},
J.~Algebra~{\bf 230} (2000), 558-595. \item[{[BK]}] G.P. Barker
and T.P. Kezlan, {\it Automorphisms of algebras of upper
triangular matrix rings}, Arch.~Math.~{\bf 55} (1990), 38-43.
\item[{[C1]}] S.P. Coelho, {\it The automorphism group of a
structural matrix algebra}, Linear Algebra Appl.~{\bf 195} (1993),
35-58. \item[{[C2]}] S.P. Coelho, {\it Automorphism groups of
certain algebras of triangular matrices}, Arch.~Math. (Basel) {\bf
61} (1993), 119-123. \item[{[D]}] W. Dicks, {\it Automorphisms of
the free algebra of rank two}, Group Actions on Rings (Brunswick,
Maine, 1984), 63-68, Contemp. Math., {\bf 43}, Amer. Math. Soc.,
Providence, RI, 1985. \item[{[DW]}]  S. D\u asc\u alescu and L.
van Wyk, {\it Do isomorphic structural matrix rings have
isomorphic graphs?}, Proc.~Amer.~Math.~Soc.~{\bf 124} (1996),
1385--1391. \item[{[F]}] C. Faith, {\it Algebra II: Ring Theory},
Springer, 1976. \item[{[I]}] I.M. Isaacs, {\it Automorphisms of
matrix algebras over commutative rings}, Linear Algebra Appl.~{\bf
31} (1980), 215-231. \item[{[J1]}] S. J\o ndrup, {\it
Automorphisms of upper triangular matrix rings},
Arch.~Math.~(Basel) {\bf 49} (1987), 497-502. \item[{[J2]}] S. J\o
ndrup, {\it The group of automorphisms of certain subalgebras of
matrix algebras}, J.~Algebra~{\bf 141} (1991), 106-114.
\item[{[Ke]}] T.P. Kezlan, {\it A note on algebra automorphisms of
triangular matrices over commutative rings}, Linear Algebra
Appl.~{\bf 135} (1990), 181-184. \item[{[Ko]}] M. Koppinen, {\it
Three automorphism theorems for triangular matrix algebras},
Linear Algebra Appl.~{\bf 245} (1996), 295-304. \item[{[KDW]}] R.
Khazal, S. D\u asc\u alescu and L. van Wyk, {\it Isomorphism of
generalized triangular matrix-rings and recovery of tiles},
Internat.~J.~Math.~Math.~Sci.~{\bf 2003}(9) (2003), 533-538.
\item[{[MCR]}] J.C. McConnell and J.C. Robson, {\it Noncommutative
Noetherian Rings}, John Wiley \& Sons, 1987. \item[{[NVO]}] C. N\u
ast\u asescu, F. van Oystaeyen, {\it Methods of graded rings},
Lecture Notes in Math., vol. 1836 (2004), Springer Verlag.
\item[{[R]}] J.J. Rotman, {\it Advanced Modern Algebra}, Prentice
Hall, New Jersey, 2002. \item[{[RZ]}] A. Rosenberg and D.
Zelinsky, {\it Automorphisms of separable algebras}, Pacific
J.~Math.~{\bf 11} (1961), 1109-1117.

\end{enumerate}

\end{document}